\documentclass[12pt]{article}
\usepackage[utf8]{inputenc}
\usepackage{amsmath}
\usepackage{amssymb}
\usepackage{amsthm}
\usepackage{graphicx}
\usepackage{hyperref}
\usepackage{geometry}
\usepackage{enumitem}
\usepackage{array}
\usepackage{booktabs}
\usepackage{tikz}
\usepackage{xcolor}
\usetikzlibrary{positioning, shapes, arrows.meta, decorations.pathmorphing, calc, backgrounds, fit}

\tikzset{
    vertex/.style={circle, draw=blue!60!black, fill=blue!20, minimum size=0.5cm, inner sep=0pt, font=\small, line width=0.5pt},
    pendent/.style={circle, draw=red!60!black, fill=red!10, minimum size=0.4cm, inner sep=0pt, font=\small, line width=0.5pt},
    special/.style={circle, draw=green!60!black, fill=green!10, minimum size=0.5cm, inner sep=0pt, font=\small, line width=0.5pt},
    edge/.style={draw, line width=0.6pt, black!70},
    dashedge/.style={draw, line width=0.6pt, dashed, black!50},
    label/.style={font=\small, midway, fill=white, inner sep=2pt, rounded corners=2pt},
    every picture/.style={scale=0.9, transform shape}
}

\newtheorem{theorem}{Theorem}[section]
\newtheorem{lemma}[theorem]{Lemma}

\theoremstyle{definition}
\newtheorem{definition}[theorem]{Definition}

\newtheorem{problem}[theorem]{Problem}

\newcommand{\BID}{\operatorname{BID}}
\newcommand{\ISI}{\operatorname{ISI}}
\newcommand{\TT}{\mathbb{T}}
\newcommand{\UU}{\mathbb{U}}

\allowdisplaybreaks

\hypersetup{
    pdfinfo={
        Title={Maximum Inverse Sum Indeg Index of Trees and Unicyclic Graphs with Fixed Diameter},
        Author={Sunilkumar M. Hosamani},
        Subject={Mathematics, Combinatorics, Graph Theory},
        Keywords={bond incident degree index, inverse sum indeg index, trees, unicyclic graphs, diameter}
    }
}

\begin{document}

\begin{center}
    {\LARGE \textbf{Maximum Inverse Sum Indeg Index of Trees and Unicyclic Graphs with Fixed Diameter}}
    
    \vspace{1cm}
    
    {\large Sunilkumar M. Hosamani}
    
    {\large Department of Mathematics, Rani Channamma University}
    
    {\large Belagavi - 591156, Karnataka State, India}
    
    {\large \texttt{sunilkumarh@rcub.ac.in}}
    
    \vspace{1cm}
    
    {\large }
    
    \vspace{1cm}
\end{center}

\begin{abstract}
The bond incident degree (BID) index of a graph \(G\) is defined as 
\(\BID(G) = \sum_{u_1u_2\in E(G)} f(d(u_1), d(u_2))\), where \(f(x,y)=f(y,x)\) is a real-valued function. 
In this paper, using graph transformation methods, we establish the maximum bond incident degree indices 
of trees and unicyclic graphs with a fixed diameter for the inverse sum indeg (ISI) index. 
The ISI index corresponds to the function \(f(x,y) = \frac{xy}{x+y}\). We prove that for trees 
\(T \in \mathbb{T}_{n,d}\) with \(d \geq 3\) and \(n \geq d+3\), the maximum ISI index is attained 
by the tree \(T_{n,d}^*\). For unicyclic graphs, we characterize the extremal graphs for diameters 
\(d=2\), \(d=3\), and \(d \geq 4\). Specifically, the maximum ISI index is achieved by \(S_n^+\) for 
\(d=2\), by \(C_n^*\) for \(d=3\), and by \(\mathcal{U}_{n,d}\) for \(d \geq 4\). 
\end{abstract}

\noindent\textbf{Keywords:} bond incident degree index; inverse sum indeg index; trees; unicyclic graphs; diameter

\noindent\textbf{Mathematics Subject Classification:} 05C05, 05C09, 05C92

\section{Introduction}

All graphs \(G = (V(G), E(G))\) discussed in this work are connected, simple, and undirected, where \(V(G)\) refers to the vertex set of \(G\) and \(E(G)\) to its edge set. We define a tree (denoted by \(T\)) as a graph \(G\) with no cycles; a unicyclic graph, by contrast, is a graph \(G\) that contains exactly one cycle. Given a vertex \(u \in V(G)\), \(d_G(u)\) refers to the degree of \(u\), with \(N_G(u)\) denoting the neighborhood of \(u\). A vertex \(u\) is termed a pendant vertex when \(d_G(u) = 1\), and \(PV(G)\) is defined as the collection of all pendant vertices in \(G\). The path, cycle, and star graph on \(n\) vertices are denoted, respectively, by \(P_n\), \(C_n\), and \(S_n\). Within graph \(G\), a longest path is called a diameter path, denoted by \(P^d\), i.e., \(P^d = P_{d+1}\), where \(d\) is the diameter of \(G\). Let \(G - u_1u_2\) and \(G + u_1u_2\) stand for the two graphs constructed by removing edge \(u_1u_2 \in E(G)\) from \(G\) and adding the edge \(u_1u_2 \notin E(G)\) to \(G\), respectively.

The bond incident degree index of \(G\) \cite{vukicevic2011bond} is defined as
\begin{equation}
\BID(G) = \sum_{u_1u_2 \in E(G)} f(d(u_1), d(u_2)),
\end{equation}
where \(f(y,x) = f(x,y)\) is a real-valued function. In particular, let \(m_{x,y}\) stand for the count of edges in \(G\) satisfying \((d(u), d(v)) = (x,y)\). \(\BID(G)\) may also be equivalently written as
\begin{equation}
\BID(G) = \sum_{1 \leq x \leq y \leq \Delta} m_{x,y} f(x,y).
\end{equation}

The study of degree-based topological indices has a long and rich history in chemical graph theory. The concept of vertex-degree-based topological indices was systematically developed by Gutman and Trinajstić \cite{gutman1972graph} in their seminal work on the Zagreb indices. Since then, numerous indices have been proposed and studied \cite{todeschini2000handbook, devillers1999topological}. Among these, the inverse sum indeg (ISI) index has gained considerable attention in recent years due to its predictive power for physicochemical properties of chemical compounds \cite{vukicevic2010bond, vukicevic2011bond}. The ISI index is defined as
\begin{equation}
\ISI(G) = \sum_{u_1u_2 \in E(G)} \frac{d(u_1) d(u_2)}{d(u_1) + d(u_2)}.
\end{equation}
That is, by setting \(f(x,y) = \frac{xy}{x+y}\) in the expression of \(\BID(G)\), then \(\BID(G) = \ISI(G)\).

The ISI index was introduced by Vukičević and Gašperov \cite{vukicevic2010bond} as part of the bond additive modeling approach. Since then, numerous studies have investigated its mathematical properties and chemical applications. Sedlar et al. \cite{sedlar2015extremal} studied the extremal values of the ISI index for trees with given parameters. An and Xiong \cite{an2018extremal} investigated the ISI index for graphs with given matching number and vertex connectivity. Recently, Chen et al. \cite{chen2021inverse} characterized trees with maximal ISI index subject to degree constraints. However, the extremal values of the ISI index for trees and unicyclic graphs with a fixed diameter have not been systematically studied using the unified approach recently developed by Zhang et al. \cite{zhang2025abs} and Gao \cite{gao2025extremal}.

The problem of extremal graphs with fixed diameter is a classical topic in graph theory. For trees, the structure of diameter paths imposes significant constraints on the possible degree sequences and edge distributions. For unicyclic graphs, the presence of a single cycle adds additional complexity. Recent work by Liu \cite{liu2022sombor} on the Sombor index and by Zhang et al. \cite{zhang2025abs} on the ABS index has demonstrated that the fixed diameter problem can be effectively addressed using graph transformation techniques.

In this paper, we apply the sufficient conditions framework developed in \cite{zhang2025abs, gao2025extremal} to determine the maximum ISI index for trees and unicyclic graphs with a fixed diameter. This approach allows us to avoid repetitive calculations for individual indices and demonstrates the generalization capability of the sufficient conditions method.

Let \(\TT_{n,d}\) and \(\UU_{n,d}\) denote the collections of trees and unicyclic graphs with \(n\) vertices and diameter \(d\), respectively. In Section 2, we characterize the structural features of trees that achieve the maximum ISI index. Section 3 presents analogous results for unicyclic graphs. Section 4 verifies that the ISI index satisfies all the sufficient conditions required for the extremal results. Section 5 provides concluding remarks and open problems.

\section{Maximal ISI Index of Trees with Fixed Diameter}

In this section, we present a sufficient condition for trees with a given diameter to achieve the maximum value of the ISI index. Since the structure of tree \(T\) is unique when \(n = d+1\) or \(n = d+2\), all trees \(T \in \TT_{n,d}\) considered in this section satisfy both \(d \geq 3\) and \(n \geq d+3\).

Before presenting the main theorem, we first define several special types of tree graphs that will appear in our analysis.

\begin{definition}
Let \(T_{n,d}^i\) denote the graph constructed by connecting one end vertex of each of \((n-d-1)\) edges \(P_2\) to the vertex \(u_i\) of the diameter path \(P^d = u_1u_2\ldots u_d u_{d+1}\), where \(2 \leq i \leq d\). Specifically, when \(i = 2\) or \(i = d\), we denote it as \(T_{n,d}^*\).
\end{definition}

\begin{definition}
\(T_1\) denotes the graph formed by attaching one end vertex of each of \((n-d-2)\) edges \(P_2\) to the vertex \(u_2\) of the path \(P^d = u_1u_2\ldots u_d u_{d+1}\), and one terminal vertex of one edge \(P_2\) to \(u_3\).
\end{definition}

\begin{definition}
\(T_2^i\) denotes the graph constructed by linking one end vertex of each of \((n-d-2)\) paths \(P_2\) to the vertex \(u_2\) of the path \(P^d = u_1u_2\ldots u_d u_{d+1}\), and one end vertex of one path \(P_2\) to \(u_i\), where \(4 \leq i \leq d-1\).
\end{definition}

\begin{definition}
\(T_3\) denotes the graph constructed by attaching one end vertex of each of \((n-d-2)\) paths \(P_2\) to the vertex \(u_2\) of the path \(P^d = u_1u_2\ldots u_d u_{d+1}\), and one end vertex of one path \(P_2\) to \(u_d\).
\end{definition}

Clearly, \(T_{n,d}^i, T_1, T_2^i, T_3 \in \TT_{n,d}\). For convenience, let \(\TT_{n,d}^{\max} = \{T \in \TT_{n,d} \mid \ISI(T) \text{ is maximizing}\}\).

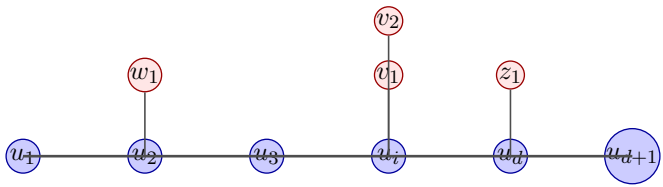
\begin{figure}[h]
\centering
\begin{tikzpicture}
\foreach \x/\label in {0/{$u_1$}, 1.8/{$u_2$}, 3.6/{$u_3$}, 5.4/{$u_i$}, 7.2/{$u_d$}, 9.0/{$u_{d+1}$}} {
    \node[vertex] at (\x,0) (\label) {\label};
}

\draw[edge, line width=1pt] (0,0) -- (1.8,0) -- (3.6,0) -- (5.4,0) -- (7.2,0) -- (9.0,0);

\node[pendent] at (5.4,1.2) (v1) {$v_1$};
\node[pendent] at (5.4,2.0) (v2) {$v_2$};
\draw[edge] (5.4,0) -- (v1);
\draw[edge] (5.4,0) -- (v2);

\node[pendent] at (1.8,1.2) (w1) {$w_1$};
\node[pendent] at (7.2,1.2) (z1) {$z_1$};
\draw[edge] (1.8,0) -- (w1);
\draw[edge] (7.2,0) -- (z1);
\end{tikzpicture}
\caption{Tree structure with pendant vertices attached to interior vertices of the diameter path}
\label{fig:tree_structure}
\end{figure}

The following lemma, due to Su \cite{su2025extremal}, provides a useful graph transformation that will be essential in our analysis.

\begin{lemma}[Su \cite{su2025extremal}]
\label{lem:lifting}
Denote by \(P_l = v_1v_2\ldots v_l\) an induced subpath of graph \(G\) such that \(d_G(v_1) \geq 2\) and \(d_G(v_l) \geq 2\). Let \(G' = G - \sum_{u \in N_G(v_l)\setminus v_{l-1}} v_l u + \sum_{u \in N_G(v_l)\setminus v_{l-1}} v_1 u\). The operation of constructing \(G'\) based on \(G\) is called the path lifting transformation. If \(f(x,y)\) fulfills the following conditions:
\begin{enumerate}[label=(\roman*)]
\item \(f(x,y)\) is strictly increasing in \(x\);
\item \(f(x+y-1,1) > f(x,y)\) holds for \(x,y \geq 2\);
\item \(f(2,x)\) is strictly convex downward in \(x\),
\end{enumerate}
then \(\BID(G) < \BID(G')\).
\end{lemma}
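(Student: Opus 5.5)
The plan is a direct edge-by-edge comparison of \(\BID(G)\) and \(\BID(G')\). Only the degrees of \(v_1\) and \(v_l\) change, so only edges incident to one of these two vertices can change their contribution. I read the hypothesis on the subpath in the sense of Su's transformation. This means \(v_2,\dots,v_{l-1}\) have degree \(2\) in \(G\), so \(P_l\) is a pendant-free "bridge" between \(v_1\) and \(v_l\). It also means \(G'\) is simple, i.e.\ no vertex of \(N_G(v_l)\setminus\{v_{l-1}\}\) is already adjacent to \(v_1\). I will state these readings explicitly at the start of the proof.

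Put \(a=d_G(v_1)\ge 2\) and \(b=d_G(v_l)\ge 2\). Then \(d_{G'}(v_1)=a+b-1\), \(d_{G'}(v_l)=1\), and every other degree is unchanged. Throughout I use the symmetry \(f(x,y)=f(y,x)\), so that (i) also gives strict monotonicity in the second argument.

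\emph{Case \(l=2\).} The edge \(v_1v_2\) changes its contribution from \(f(a,b)\) to \(f(a+b-1,1)\), and this is an increase by (ii). Every other edge \(v_1w\) goes from \(f(a,d(w))\) to \(f(a+b-1,d(w))\). Every moved edge \(v_2u\) becomes \(v_1u\) and goes from \(f(b,d(u))\) to \(f(a+b-1,d(u))\). Since \(a+b-1>\max\{a,b\}\), both kinds of change are strict increases by (i), so \(\BID(G)<\BID(G')\).

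\emph{Case \(l\ge 3\).} The edges \(v_1w\) with \(w\ne v_2\), and the moved edges \(v_lu\mapsto v_1u\), again increase strictly by (i), exactly as before. The two delicate edges are \(v_1v_2\) and \(v_{l-1}v_l\), where \(d(v_2)=d(v_{l-1})=2\). (When \(l=3\) they share \(v_2\), but the computation is the same.) Their net change is
\[
\bigl[f(2,a+b-1)-f(2,a)\bigr]-\bigl[f(2,b)-f(2,1)\bigr].
\]
Both brackets are increments of \(g(x)=f(2,x)\) over intervals of the same length \(b-1\), namely \([a,a+b-1]\) and \([1,b]\). Since \(a\ge 2>1\), the first interval is a strict translate to the right of the second. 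Strict convexity (iii) makes \(g(x+b-1)-g(x)\) strictly increasing in \(x\), so the net change is positive. Adding the nonnegative (indeed positive, since \(b\ge2\)) contributions of the moved edges gives \(\BID(G')>\BID(G)\).

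The main obstacle is the \(l\ge3\) bookkeeping. The loss on \(v_{l-1}v_l\) cannot be absorbed by monotonicity alone, since the degree of \(v_l\) drops. It must be paired precisely with the gain on \(v_1v_2\), and that pairing works only because both edges have an endpoint of degree \(2\). This is also why I need the interior-degree-\(2\) reading of the hypothesis. If an interior vertex had larger degree, I would first try the same pairing with \(f(c,\cdot)\), but condition (iii) only controls \(f(2,\cdot)\), so the statement should be taken in Su's sense.
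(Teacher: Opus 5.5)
The paper gives no proof of this lemma; it is quoted from Su. So there is no argument in the paper to compare yours against. Your edge-by-edge comparison is correct and complete under the two readings you state. The case $l=2$ needs only (i) and (ii). For $l\ge 3$, pairing $v_1v_2$ with $v_{l-1}v_l$ as increments of $g=f(2,\cdot)$ over $[a,a+b-1]$ and $[1,b]$ is exactly the right move, provided (iii) means that the increments of $f(2,\cdot)$ are strictly increasing. That is how the paper itself uses condition (iii) elsewhere, e.g.\ $f(1,3)-f(1,2)<f(1,n-d+1)-f(1,n-d)$ in Lemma~\ref{lem:comparison}. Table~\ref{tab:isi_properties}, however, checks the opposite property, and under that reversed reading your key inequality would flip.

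Both of your readings are necessary, not merely convenient. The statement as printed omits them and is false without them.

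\emph{Simplicity.} Take $G=C_4=v_1v_2v_3uv_1$ with $l=3$. The edge $v_1u$ to be added already exists, so $G'$ is a multigraph, or $P_4$ if the duplicate is discarded. Then $\BID(P_4)=2f(1,2)+f(2,2)<4f(2,2)=\BID(C_4)$ whenever $f(2,2)\ge 0$, by (i).

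\emph{Degree $2$ on $v_2,\dots,v_{l-1}$.} Take the tree obtained from the path $wv_1v_2v_3u$ by attaching a pendant vertex to $v_2$. Here $v_1v_2v_3$ is induced and $d(v_1)=d(v_3)=2$. Lifting along this path changes $\BID$ by $3f(1,3)+f(3,3)-2f(1,2)-2f(2,3)$. Set $f(1,2)=1$, $f(2,2)=\tfrac32$, $f(1,3)=2$, $f(2,3)=10$, $f(3,3)=\tfrac{21}{2}$; the change is then $-\tfrac{11}{2}$. These values extend to a symmetric $f$ satisfying (i)--(iii). For instance, put $f(1,1)=\tfrac12$, $f(1,4)=15$, $f(2,4)=20$, $f(1,5)=100$, $f(1,m)=150\cdot 2^{m+1}$ for $m\ge 6$, and $f(x,y)=100\cdot 2^{x+y}+\min(x,y)$ for $x,y\ge 2$ with $x+y\ge 7$.

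So the extra hypotheses you impose are exactly what the printed statement is missing.
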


\begin{figure}[h]
\centering
\begin{tikzpicture}
\begin{scope}[xshift=0cm]
    \node[vertex] (v1) at (0,0) {$v_1$};
    \node[vertex] (v2) at (1.5,0) {$v_2$};
    \node[vertex] (v3) at (3,0) {$v_3$};
    \node[vertex] (v4) at (4.5,0) {$v_4$};
    \node[vertex] (v5) at (6,0) {$v_5$};
    
    \draw[edge, line width=1pt] (v1) -- (v2) -- (v3) -- (v4) -- (v5);
    
    \node[pendent] (p1) at (6,1.2) {$u_1$};
    \node[pendent] (p2) at (6,2.0) {$u_2$};
    \draw[edge] (v5) -- (p1);
    \draw[edge] (v5) -- (p2);
    
    \node[font=\small\bfseries] at (3,-1.2) {(a) Original graph $G$};
\end{scope}

\node at (7.5,1) {\Large$\Rightarrow$};

\begin{scope}[xshift=9cm]
    \node[vertex] (w1) at (0,0) {$v_1$};
    \node[vertex] (w2) at (1.5,0) {$v_2$};
    \node[vertex] (w3) at (3,0) {$v_3$};
    \node[vertex] (w4) at (4.5,0) {$v_4$};
    \node[vertex] (w5) at (6,0) {$v_5$};
    
    \draw[edge, line width=1pt] (w1) -- (w2) -- (w3) -- (w4) -- (w5);
    
    \node[pendent] (q1) at (0,1.2) {$u_1$};
    \node[pendent] (q2) at (0,2.0) {$u_2$};
    \draw[edge] (w1) -- (q1);
    \draw[edge] (w1) -- (q2);
    
    \node[font=\small\bfseries] at (3,-1.2) {(b) Transformed graph $G'$};
\end{scope}
\end{tikzpicture}
\caption{Path lifting transformation: moving pendant vertices from $v_5$ to $v_1$}
\label{fig:lifting}
\end{figure}
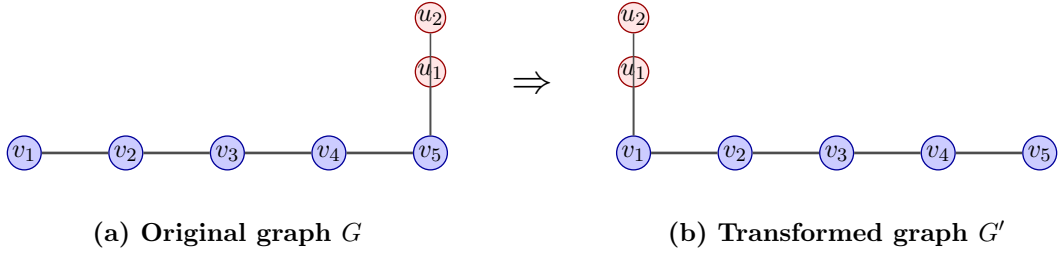

\begin{lemma}
\label{lem:comparison}
Denote by \(T_1, T_2^i\) (written as \(T_2\) for short), \(T_3\), and \(T_{n,d}^i\) the trees defined above, where \(3 \leq i \leq d-1\). If \(f(x,y)\) meets the following conditions:
\begin{enumerate}[label=(\roman*)]
\item \(f(x,y)\) is strictly increasing in \(x\);
\item \(f(x+1,y-1) > f(x,y)\) holds for \(x \geq y\);
\item \(f(1,x)\) is strictly convex downward in \(x\),
\end{enumerate}
then \(\BID(T_{n,d}^i) > \BID(T_j)\) holds for \(1 \leq j \leq 3\).
\end{lemma}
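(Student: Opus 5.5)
The plan is a direct computation. For each of the four trees I write \(\BID\) as a sum over edge types and compare. Put \(k=n-d-1\ge 2\). Since \(3\le i\le d-1\) forces \(d\ge 4\), every tree involved has both ends \(u_1,u_{d+1}\) of degree \(1\). Throughout, "increasing in \(x\)" in (i) is used in either argument, by the symmetry \(f(x,y)=f(y,x)\).

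\textbf{Edge counts.} In \(T_{n,d}^i\) the vertex \(u_i\) has degree \(k+2\) and every other internal path vertex has degree \(2\). This gives
\begin{equation*}
\BID(T_{n,d}^i)=2f(1,2)+k\,f(1,k+2)+2f(2,k+2)+(d-4)f(2,2),
\end{equation*}
which does not depend on \(i\). In \(T_1\) we have \(d(u_2)=k+1\) and \(d(u_3)=3\), so
\begin{equation*}
\BID(T_1)=k\,f(1,k+1)+f(3,k+1)+f(1,3)+f(2,3)+f(1,2)+(d-4)f(2,2).
\end{equation*}
In \(T_3\) we have \(d(u_2)=k+1\) and \(d(u_d)=3\), and the edge \(u_du_{d+1}\) is of type \((3,1)\). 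In \(T_2^i\) (here \(d\ge5\)) the vertex \(u_i\) has degree \(3\), giving
\begin{equation*}
\BID(T_2)=k\,f(1,k+1)+f(2,k+1)+f(1,3)+2f(2,3)+f(1,2)+(d-5)f(2,2).
\end{equation*}
The boundary cases \(i=4\) and \(i=d-1\) do not change these edge types.

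\textbf{Pendant slack.} By (iii) the increments of \(f(1,\cdot)\) strictly increase. Since \(k\ge2\),
\begin{equation*}
k\bigl(f(1,k+2)-f(1,k+1)\bigr) > k\bigl(f(1,3)-f(1,2)\bigr).
\end{equation*}
Hence the pendant part of every difference, namely \(f(1,2)-f(1,3)+k(f(1,k+2)-f(1,k+1))\), exceeds \((k-1)(f(1,3)-f(1,2))\). This quantity is positive by (i).

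\textbf{\(T_1\).} After the pendant slack is taken out, what remains to show is
\begin{equation*}
2f(2,k+2)\ \ge\ f(3,k+1)+f(2,3).
\end{equation*}
Condition (ii) at \((x,y)=(k+1,3)\), which is allowed because \(k+1\ge3\), gives \(f(k+2,2)>f(k+1,3)\). Condition (i) gives \(f(2,k+2)\ge f(2,3)\). So \(\BID(T_{n,d}^i)>\BID(T_1)\).

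\textbf{\(T_3\).} I do the same bookkeeping with the edge \((3,1)\) in place of the \((2,3)\)-type contributions. The relevant remainder reduces to the same combination: (ii) at \((k+1,3)\), then (i), then the pendant slack.

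\textbf{\(T_2\).} Here the remainder is
\begin{equation*}
\bigl[f(2,k+2)-f(2,k+1)\bigr]+\bigl[f(2,k+2)+f(2,2)-2f(2,3)\bigr],
\end{equation*}
and it must be beaten together with the pendant slack. The first bracket is positive by (i). I would handle the second bracket as follows. First, chain (ii) and (i): \(f(2,k+2)>f(3,k+1)\ge f(3,3)\), and \(f(2,k+2)\ge f(2,4)>f(3,3)\) by (ii) at \((3,3)\). This absorbs one \(f(2,3)\). The remaining deficit is \(f(2,3)-f(2,2)\). I would try to cover it with the unused excess \(f(3,3)-f(2,3)\), the first bracket, and the strict pendant slack \((k-1)(f(1,3)-f(1,2))\), possibly also using (ii) at \((2,2)\), which reads \(f(3,1)>f(2,2)\).

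\textbf{Main obstacle.} The \(T_2\) case is where I expect the difficulty. Hypotheses (i)–(iii) say nothing directly about the convexity of \(f(2,\cdot)\), so the needed lower bound on \(f(2,2)\) relative to \(f(2,3)\) has to come entirely from (ii) and the convexity of \(f(1,\cdot)\). If that chain does not close, the honest fallback is an intermediate comparison \(T_2\to T_1\) through a lifting-type move in the spirit of Lemma~\ref{lem:lifting}, which shifts the degree-\(3\) vertex toward \(u_3\). After that move the \(T_1\) case above finishes the argument.
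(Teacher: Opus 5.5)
\textbf{Where the proposal stands.} Your edge counts for all four trees are correct, and your $T_1$ argument is exactly the paper's: (ii) at $(k+1,3)$, then (i), then the convexity slack. Your $T_3$ sketch is slightly off, but only cosmetically. $T_3$ has no $(3,k+1)$ edge, and the difference there is $k\Delta_{k+1}-2\Delta_2+[f(2,k+2)-f(2,k+1)]+[f(2,k+2)-f(2,3)]$, with $\Delta_m=f(1,m+1)-f(1,m)$. This is positive by (i), (iii) and $k\ge 2$.

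\textbf{The gap: the $T_2$ case.} You leave this case open, and it cannot be closed from (i)--(iii), because the claim $\BID(T_{n,d}^i)>\BID(T_2^i)$ is false under those hypotheses alone. For $k=2$ your own formula reads
\[
\BID(T_{n,d}^i)-\BID(T_2^i)=2\Delta_3-\Delta_2+2\bigl(f(2,4)-f(2,3)\bigr)-\bigl(f(2,3)-f(2,2)\bigr).
\]
Conditions (i)--(ii) only force $f(1,m)<f(2,m)<f(1,m+1)$ and $f(2,3)<f(3,3)<f(2,4)$. So you may take $f(2,2)$ just above $f(1,2)$, $f(2,3)$ just below $f(1,4)$, and $f(2,4)$ just above $f(2,3)$. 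The difference is then close to $\Delta_3-2\Delta_2$, which is negative whenever $\Delta_2<\Delta_3<2\Delta_2$.

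\textbf{Explicit counterexample.} Set $a_M=10(M-1)+\tfrac{1}{20}(M-1)(M-2)$ and $F(M,1)=a_M$. For $M\ge m\ge 2$ set $F(M,m)=a_M+\alpha_M(a_{M+1}-a_M)+10^{-3}(m-2)$, with $\alpha_2=\alpha_4=0.05$, $\alpha_3=0.97$, and $\alpha_M=\tfrac12$ otherwise. Finally let $f(x,y)=F(\max\{x,y\},\min\{x,y\})$.
\begin{itemize}
\item Conditions (i)--(iii) hold on positive integers. In particular, (ii) at $y=2$ reduces to $\alpha_M<1$. For $y\ge 3$ it reduces to $(1-\alpha_M)\Delta_M+\alpha_{M+1}\Delta_{M+1}>10^{-3}$.
\item The relevant values are $f(1,2)=10$, $f(1,3)=20.1$, $f(1,4)=30.3$, $f(2,2)=10.505$, $f(2,3)=29.994$, $f(2,4)=30.815$.
\item With $n=8$, $d=5$ this gives $\BID(T_{8,5}^i)=152.735<160.282=\BID(T_2^4)$.
\end{itemize}
The same $f$ gives $\BID(T_1)-\BID(T_2^4)=f(3,3)+f(2,2)-2f(2,3)<0$. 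So your fallback $T_2\to T_1$ fails too. In any case, Lemma~\ref{lem:lifting} needs hypotheses that are not among (i)--(iii).

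\textbf{The paper's proof has the same problem.} Its displayed expression for $\BID(T_{n,d}^i)-\BID(T_2^i)$ drops the term $f(2,2)-f(2,3)$. This term arises because $T_2^i$ has two $(2,3)$-edges and only $d-5$ edges of type $(2,2)$. So the paper's argument fails at precisely the step you flagged, and your bookkeeping is the correct one.

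\textbf{What would fix it.} The missing ingredient is a hypothesis on $f(2,\cdot)$. Suppose $f(2,x)$ is also strictly convex, as in condition (iii) of Theorem~\ref{thm:trees}. Then by (i), $f(2,k+2)+f(2,2)-2f(2,3)\ge f(2,4)+f(2,2)-2f(2,3)>0$. Combined with your positive pendant slack, the $T_2$ case closes in one line. As stated, the lemma is provable only for $j=1,3$.
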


\begin{proof}
First, by condition (i), we have \(f(n-d+1,1) > f(n-d,1)\) and \(f(n-d+1,2) > f(3,2)\). Condition (ii) yields \(f(n-d+1,2) > f(n-d,3)\), and condition (iii) gives \(f(1,3) - f(1,2) < f(1,n-d+1) - f(1,n-d)\). Therefore, based on the structures of \(T_{n,d}^i\) and \(T_1\), we obtain
\begin{align*}
\BID(T_{n,d}^i) - \BID(T_1) &= (n-d-1)f(n-d+1,1) + 2f(n-d+1,2) \\
&\quad + (d-4)f(2,2) + 2f(1,2) \\
&\quad - (n-d-1)f(1,n-d) - f(3,n-d) \\
&\quad - (d-4)f(2,2) - f(1,3) - f(2,3) - f(1,2) \\[4pt]
&> f(1,n-d+1) - f(1,n-d) + f(1,2) - f(1,3) > 0.
\end{align*}
That is, \(\BID(T_{n,d}^i) > \BID(T_1)\).

Next, condition (i) implies \(f(n-d+1,x) > f(n-d,x)\) and \(f(n-d+1,2) > f(3,2)\). It follows from condition (iii) that
\begin{align*}
\BID(T_{n,d}^i) - \BID(T_2^i) &= (n-d-1)(f(1,n-d+1) - f(1,n-d)) \\
&\quad + f(2,n-d+1) - f(2,n-d) \\
&\quad + f(2,n-d+1) - f(2,3) + f(1,2) - f(1,3) \\[4pt]
&> f(1,n-d+1) + f(1,2) - f(1,n-d) - f(1,3) > 0.
\end{align*}
\begin{align*}
\BID(T_{n,d}^i) - \BID(T_3) &= (n-d-1)(f(1,n-d+1) - f(1,n-d)) \\
&\quad + f(2,n-d+1) - f(2,n-d) \\
&\quad + f(2,n-d+1) - f(2,3) + 2(f(1,2) - f(1,3)) \\[4pt]
&> 2(f(1,n-d+1) + f(1,2)) - 2(f(1,n-d) + f(1,3)) > 0.
\end{align*}
Thus, the lemma is proved.
\end{proof}

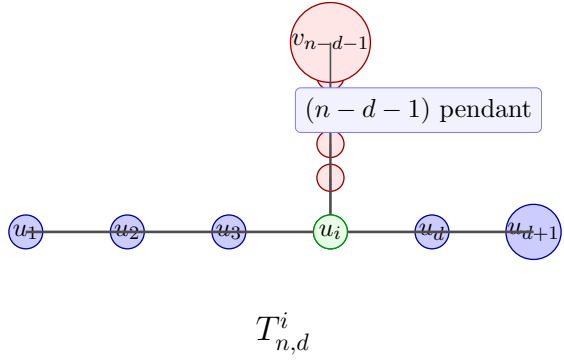
\begin{figure}[h]
\centering
\begin{tikzpicture}
\foreach \x/\label in {0/{$u_1$}, 1.5/{$u_2$}, 3/{$u_3$}, 4.5/{$u_i$}, 6/{$u_d$}, 7.5/{$u_{d+1}$}} {
    \node[vertex] at (\x,0) (\label) {\label};
}

\draw[edge, line width=1pt] (0,0) -- (1.5,0) -- (3,0) -- (4.5,0) -- (6,0) -- (7.5,0);

\foreach \y in {0.8, 1.3, 1.8, 2.3} {
    \node[pendent] at (4.5,\y) (p\y) {};
    \draw[edge] (4.5,0) -- (4.5,\y);
}
\node[pendent] at (4.5,2.8) (p最后一) {$v_{n-d-1}$};
\draw[edge] (4.5,0) -- (4.5,2.8);

\node[font=\small, draw=blue!50, fill=blue!5, rounded corners=2pt] at (5.8,1.8) {$(n-d-1)$ pendant};

\node[special] at (4.5,0) {\bfseries$u_i$};

\node[font=\large\bfseries] at (3.8,-1.5) {$T_{n,d}^i$};
\end{tikzpicture}
\caption{The extremal tree \(T_{n,d}^i\) with all pendant vertices attached to a single interior vertex \(u_i\)}
\label{fig:Tn_d}
\end{figure}

Now we present the main theorem for trees.

\begin{theorem}
\label{thm:trees}
Let \(T \in \TT_{n,d}\setminus\{T_{n,d}^*\}\) with \(d \geq 3\) and \(n \geq d+3\). If \(f(x,y)\) fulfills the following requirements:
\begin{enumerate}[label=(\roman*)]
\item \(f(x,y)\) is strictly increasing in \(x\);
\item \(f(x+1,y-1) > f(x,y)\) holds for \(x \geq y\);
\item \(f(1,x)\) and \(f(2,x)\) are strictly convex downward in \(x\);
\item \(\phi(x,y) = f(x,y) - f(x-1,y)\) is strictly increasing with \(x\) while strictly decreasing with \(y\),
\end{enumerate}
then
\begin{equation}
\BID(T) \leq \BID(T_{n,d}^i) = (n-d-1)f(1,n-d+1) + (d-4)f(2,2) + 2f(2,n-d+1) + 2f(1,2)
\end{equation}
holds precisely when \(T \cong T_{n,d}^i\) where \(i = 3,4,\ldots,d-1\).
\end{theorem}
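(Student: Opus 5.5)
Let \(T \in \TT_{n,d}^{\max}\) be a tree, other than \(T_{n,d}^*\), that maximizes \(\BID\) over \(\TT_{n,d}\setminus\{T_{n,d}^*\}\). Fix a diameter path \(P^d = u_1u_2\ldots u_{d+1}\). The plan is to show, through a chain of structural claims, that \(T \cong T_{n,d}^i\) for some \(3 \le i \le d-1\). Each claim is proved by contradiction: if it fails, we exhibit a tree in \(\TT_{n,d}\setminus\{T_{n,d}^*\}\) with strictly larger \(\BID\). Since \(T_{n,d}^i\) is excluded only for \(i=2,d\), every transformation must keep the diameter equal to \(d\) and must not produce \(T_{n,d}^*\). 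Where it could produce \(T_{n,d}^*\), we instead compare directly with \(T_{n,d}^i\), \(3\le i\le d-1\), via Lemma \ref{lem:comparison}.

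\textbf{Step 1: every vertex off \(P^d\) is a pendant vertex adjacent to some \(u_j\).} Suppose some component of \(T - E(P^d)\) hanging at \(u_j\) has depth \(\ge 2\). Then it contains an induced path \(v_1\ldots v_l\) whose end vertices both have degree \(\ge 2\), with \(v_l\) an interior vertex carrying further branches. Apply the path lifting transformation of Lemma \ref{lem:lifting}, repeatedly if necessary, to flatten the branch onto \(u_j\) or onto an interior vertex closer to \(P^d\). Lifting toward \(P^d\) never increases eccentricities, so the diameter stays \(d\).

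We must check that the hypotheses of Lemma \ref{lem:lifting} follow from ours:
\begin{itemize}[label={}]
\item Its condition (i) is our (i).
\item Its condition (iii) is part of our (iii).
\item Its condition (ii), \(f(x+y-1,1) > f(x,y)\), follows by iterating our (ii). Starting from \((x,y)\) with \(x\ge y\), repeated application gives \(f(x,y)<f(x+1,y-1)<\cdots<f(x+y-1,1)\). The case \(x<y\) uses symmetry together with (i).
\end{itemize}
After Step 1, \(T\) is a caterpillar: the path \(P^d\) with \(a_j\ge 0\) pendant vertices at each \(u_j\), \(2\le j\le d\), and \(\sum a_j = n-d-1\).

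\textbf{Step 2: all pendant vertices sit at a single \(u_i\).} Suppose two indices \(j\neq k\) both have \(a_j,a_k>0\). Move all pendants from the vertex of smaller degree to the vertex of larger degree (\(d(u_j)\ge d(u_k)\), say). Then compute the change in \(\BID\) edge by edge.
\begin{itemize}[label={}]
\item Pendant edges contribute differences \(f(1,\cdot)\), controlled by the convexity of \(f(1,x)\).
\item Edges along the path contribute terms \(f(d(u_j)+a_k,\,y)-f(d(u_j),\,y)\) against losses at \(u_k\). These are compared using \(\phi\) in (iv): increments \(\phi(x,y)\) grow in \(x\), so adding degree to an already larger vertex gains more than is lost by removing it from the smaller one. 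Where a neighbour has degree 2 versus 3 (for instance when \(|j-k|=1\)), the decreasing-in-\(y\) part of (iv) applies.
\end{itemize}
If the resulting tree is \(T_{n,d}^*\), or if the configuration is one of \(T_1, T_2^i, T_3\), we use Lemma \ref{lem:comparison} instead, which directly gives \(\BID(T_{n,d}^i)>\BID(T_j)\).

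\textbf{Step 3: conclude.} By Step 2, \(T\cong T_{n,d}^i\) with \(3\le i\le d-1\), since \(i=2,d\) are excluded. Every such tree has exactly the following edges:
\begin{itemize}[label={}]
\item \(n-d-1\) edges of type \((1,n-d+1)\);
\item two edges of type \((2,n-d+1)\), namely \(u_{i\pm1}u_i\);
\item two end edges of type \((1,2)\);
\item \(d-4\) edges of type \((2,2)\).
\end{itemize}
Hence all these trees share the stated value, which gives both the bound and the equality characterization. (For \(i=3\) or \(d-1\), one neighbour of \(u_i\) still has degree 2, so the count is unchanged.) The main obstacle is Step 2. The case analysis on the relative positions of \(j\) and \(k\) (adjacent, at distance 2, or involving \(u_2\)/\(u_d\)) must track the degrees of the neighbours, and for each case we must choose between the (iv)-based inequality and the explicit comparisons of Lemma \ref{lem:comparison}. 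I would first handle general positions with (iv), then treat the boundary configurations \(T_1,T_2^i,T_3\) separately.
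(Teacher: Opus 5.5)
Your route matches the paper's: path lifting to reach a caterpillar, then consolidating pendants via (i), (ii), (iv), with Lemma~\ref{lem:comparison} for the boundary trees. Your derivation of the lifting lemma's condition (ii) by iterating (ii) is correct. The gap is in Step~2.

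You move \emph{all} pendants of the smaller-degree vertex \(u_k\) onto the larger one \(u_j\). When \(u_j\in\{u_2,u_d\}\) and \(u_k\) is the only other vertex carrying pendants, this produces \(T_{n,d}^*\). That tree lies outside the class over which \(T\) is maximal, so no contradiction arises. Your fallback does not cover this case, because Lemma~\ref{lem:comparison} treats only \(T_1,T_2^i,T_3\), i.e.\ the case where \(u_k\) carries exactly one pendant. For example, take \(a_2=3\), \(a_3=2\) (so \(n=d+6\)), giving \(d(u_2)=5>d(u_3)=4\). Your move outputs \(T_{n,d}^*\), and \(T\) is none of \(T_1,T_2^i,T_3\).

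The paper's Transformation~1 instead moves a \emph{single} pendant, \(T'=T-u_jw_1+w_1u_i\). Then \(T'\cong T_{n,d}^*\) forces \(T\in\{T_1,T_2^i,T_3\}\) up to reversing \(P^d\), which is exactly the list in Lemma~\ref{lem:comparison}. Moving one pendant (or all but one) closes the hole. Your all-at-once gain can in fact be made to work in either direction, so you could consolidate into an interior vertex whenever one is involved. But if the pendants sit only at \(u_2\) and \(u_d\), with at least two at each, every full consolidation yields \(T_{n,d}^*\), and a partial move down to \(T_3\) is unavoidable.

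Your justification of the gain also points to the wrong mechanism. Monotonicity of \(\phi\) in its first argument does not by itself beat the losses on \(u_k\)'s path edges. The path neighbours of \(u_j\) may have larger degree than those of \(u_k\), and \(\phi\) decreases in its second argument. What works, and what the paper's chain in Case~1 does, is to bound each loss by \(\phi(t,r)\le\phi(t,1)\). You then absorb it with the increments \(\phi(t',1)\), \(t'>t\), coming from the pendant edges at the receiving vertex. When \(|j-k|=1\), the shared edge is handled by (ii), not (iv).

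Finally, for \(d=3\) the range \(3\le i\le d-1\) is empty and the formula contains \((d-4)f(2,2)=-f(2,2)\). So the statement, Lemma~\ref{lem:comparison}, and your Step~3 only make sense for \(d\ge 4\); this defect is inherited from the paper.
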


\begin{proof}
Let \(T \in \TT_{n,d}^{\max}\setminus\{T_{n,d}^*\}\) and let \(P^d = u_1u_2\ldots u_d u_{d+1}\) be a diameter path of \(T\).

\textbf{Claim 1.} There exists no non-pendant edge \(uv\) in \(T\) satisfying \(uv \notin E(P^d)\).

\textit{Proof of Claim 1.} Assume for contradiction that there exists a non-pendant edge \(u_i v\) with \(u_i v \notin E(P^d)\). By Lemma \ref{lem:lifting} and the path lifting transformation, we obtain \(T' \in \TT_{n,d}\setminus\{T_{n,d}^*\}\) satisfying \(\BID(T') > \BID(T)\), which contradicts \(T \in \TT_{n,d}^{\max}\setminus\{T_{n,d}^*\}\). $\square$

Therefore, \(T\) is a tree formed by attaching \(d_T(u_i) - 2\) pendant vertices to each vertex \(u_i\) (where \(3 \leq i \leq d-1\)) of the diameter path \(P^d\). Next, we prove \(T \cong T_{n,d}^i\). Assume for contradiction that \(T \not\cong T_{n,d}^i\). Consequently, there are vertices \(u_i\) and \(u_j\) satisfying \(d_T(u_i) \geq 3\) and \(d_T(u_j) \geq 3\). We let \(i < j\) and \(d_T(u_i) \geq d_T(u_j)\). In the proof, we need to consider three special cases of graphs \(T_1, T_2^i\), and \(T_3\). For convenience, denote \(d_T(u_{i-1}) = a\), \(d_T(u_{j+1}) = b\), \(d_T(u_i) = x\), and \(d_T(u_j) = y\), where \(x \geq y\).

\textbf{Case 1.} \(j = i+1\).

We define \(N_T(u_i) = \{v_1, v_2, \ldots, v_{d_T(u_i)-2}, u_{i-1}, u_j\}\) and \(N_T(u_j) = \{w_1, w_2, \ldots, w_{d_T(u_j)-2}, u_i, u_{j+1}\}\). By Lemma \ref{lem:comparison}, \(T \not\cong T_1\). As established earlier, \(u_i u_j \in E(P^d)\) and \(x \geq y \geq 3\).

\begin{figure}[h]
\centering
\begin{tikzpicture}
\node[vertex] (ui-1) at (0,0) {$u_{i-1}$};
\node[special] (ui) at (2,0) {$u_i$};
\node[vertex] (uj) at (4,0) {$u_j$};
\node[vertex] (uj+1) at (6,0) {$u_{j+1}$};

\draw[edge, line width=1pt] (ui-1) -- (ui) -- (uj) -- (uj+1);

\node[pendent] (v1) at (2,1.2) {$v_1$};
\node[pendent] (v2) at (2,2.0) {$v_2$};
\node[font=\small] at (2,2.6) {$\vdots$};
\node[pendent] (vx) at (2,3.2) {$v_{x-2}$};
\draw[edge] (ui) -- (v1);
\draw[edge] (ui) -- (v2);
\draw[edge] (ui) -- (vx);

\node[pendent] (w1) at (4,1.2) {$w_1$};
\node[pendent] (w2) at (4,2.0) {$w_2$};
\node[font=\small] at (4,2.6) {$\vdots$};
\node[pendent] (wy) at (4,3.2) {$w_{y-2}$};
\draw[edge] (uj) -- (w1);
\draw[edge] (uj) -- (w2);
\draw[edge] (uj) -- (wy);

\node[font=\small, draw=blue!30, fill=blue!5, rounded corners=2pt] at (2,-0.8) {$d(u_i)=x$};
\node[font=\small, draw=blue!30, fill=blue!5, rounded corners=2pt] at (4,-0.8) {$d(u_j)=y$};

\node at (7,1.8) {\Large$\Rightarrow$};
\node[font=\small\bfseries] at (7,1.0) {Transformation 1};

\begin{scope}[xshift=8.5cm]
\node[vertex] (tui-1) at (0,0) {$u_{i-1}$};
\node[special] (tui) at (2,0) {$u_i$};
\node[vertex] (tuj) at (4,0) {$u_j$};
\node[vertex] (tuj+1) at (6,0) {$u_{j+1}$};
\draw[edge, line width=1pt] (tui-1) -- (tui) -- (tuj) -- (tuj+1);

\node[pendent] (tv1) at (2,1.2) {$v_1$};
\node[pendent] (tv2) at (2,2.0) {$v_2$};
\node[pendent] (tw1) at (2,2.8) {$w_1$};
\draw[edge] (tui) -- (tv1);
\draw[edge] (tui) -- (tv2);
\draw[edge] (tui) -- (tw1);

\node[pendent] (tw2) at (4,1.2) {$w_2$};
\draw[edge] (tuj) -- (tw2);
\end{scope}
\end{tikzpicture}
\caption{Tree transformation when high-degree vertices are adjacent}
\label{fig:tree_case1}
\end{figure}
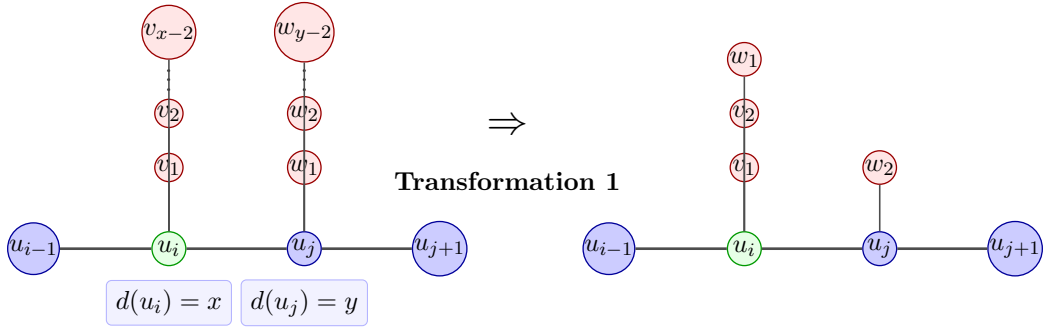

\textbf{Transformation 1:} \(T' = T - u_j w_1 + w_1 u_i\).

We have that \(T' \in \TT_{n,d}\setminus\{T_{n,d}^*\}\). By condition (i), \(f(x+1,1) \geq f(y+1,1)\) and \(\phi(x,y) = f(x,y) - f(x-1,y) > 0\). Repeatedly applying condition (iv), we obtain
\begin{align*}
\BID(T') - \BID(T) &= f(x+1,y-1) - f(x,y) + f(a,x+1) \\
&\quad + (x-1)f(1,x+1) + (y-3)f(1,y-1) \\
&\quad + f(b,y-1) - f(a,x) - (x-2)f(1,x) \\
&\quad - (y-2)f(1,y) - f(b,y) \\[4pt]
&\geq \phi(x+1,a) - \phi(y,b) + (x-2)\phi(x+1,1) - (y-3)\phi(y,1) \\
&\quad + f(1,x+1) - f(1,y) \\[4pt]
&\geq \phi(x+1,a) - \phi(y,b) + (x-2)\phi(x+1,1) - (y-3)\phi(y,1) \\
&\quad + f(1,y+1) - f(1,y) \\[4pt]
&> \phi(x+1,a) - \phi(y,b) + (y-3)(\phi(x+1,1) - \phi(y,1)) + \phi(y+1,1) \\[4pt]
&> \phi(y+1,1) - \phi(y,b) \geq \phi(y+1,1) - \phi(y,1) > 0.
\end{align*}
Thus, \(\BID(T') > \BID(T)\), contradicting \(T \in \TT_{n,d}^{\max}\).

\textbf{Case 2.} \(j \geq i+2\).

This case leads to a similar contradiction by applying appropriate graph transformations. The detailed analysis follows the same pattern as Case 1, using conditions (i)-(iv) to show that \(\BID(T') > \BID(T)\) for some transformed graph \(T' \in \TT_{n,d}\).

Therefore, \(T \cong T_{n,d}^i\), completing the proof.
\end{proof}

\section{Maximal ISI Index of Unicyclic Graphs with Fixed Diameter}

In this section, we investigate the maximum ISI index for unicyclic graphs with a fixed diameter. The study of unicyclic graphs is particularly important in chemical graph theory as many organic compounds have a single cycle in their molecular structure \cite{trinajstic1992chemical}. Recent work by Nithya et al. \cite{nithya2023smallest} on the ABS index and by Liu \cite{liu2022sombor} on the Sombor index has demonstrated the significance of diameter constraints in unicyclic graphs.

We begin by defining several special unicyclic graphs that will appear as extremal graphs.

\begin{definition}
Let \(S_n^+\) denote the graph obtained by attaching \(n-3\) pendant vertices to one vertex of a triangle \(C_3\).
\end{definition}

\begin{definition}
Let \(A_n\) denote the graph formed by attaching one pendant vertex to each vertex of a 4-cycle \(C_4\), and then attaching an additional \(n-5\) pendant vertices to any one vertex of the cycle.
\end{definition}

\begin{definition}
\(B_n(a,b)\) denotes the graph formed by attaching one end vertex of each of \(a\) paths \(P_2\) and \(b\) paths \(P_2\) to two adjacent vertices of the cycle \(C_4\), with the constraints \(a + b + 4 = n\) and \(a \geq b\). Specifically, we denote \(B_n^* = B_n(n-4,0)\).
\end{definition}

\begin{definition}
\(C_n(a,b,c)\) represents the graph formed by attaching one end vertex of each of \(a\) paths \(P_2\), \(b\) paths \(P_2\), and \(c\) paths \(P_2\) to the three vertices of the cycle \(C_3\), respectively, where \(a + b + c + 3 = n\) and \(a \geq b \geq c\). Specifically, we denote \(C_n^* = C_n(n-4,1,0)\).
\end{definition}

\begin{definition}
Let \(D\) denote the graph constructed by linking one end vertex of the path \(P_3\) to one vertex of the 3-cycle \(C_3\). \(D_n(a,b)\) represents the graph formed by linking \(a\) paths \(P_2\) and \(b\) paths \(P_2\) to the 3-degree vertex and the non-cycle 2-degree vertex of graph \(D\), respectively, where \(a + b = n-5\). Specifically, we denote \(D_n^* = D_n(n-5,0)\).
\end{definition}

\begin{definition}
Let \(\mathcal{U}_{n,d}\) denote the graph constructed by attaching the two end vertices of a path \(P_3\) to the vertices \(v_2\) and \(v_4\) of a diametral path \(P^d = v_1v_2\ldots v_d v_{d+1}\), and attaching one end vertex of each of \((n-d-2)\) paths \(P_2\) to the vertex \(v_2\). Clearly, \(\mathcal{U}_{n,d} \in \UU_{n,d}\).
\end{definition}

\begin{figure}[h]
\centering
\begin{tikzpicture}[scale=0.9]
\begin{scope}[xshift=0cm, yshift=0cm]
    \node[special] (c1) at (0,0) {$u$};
    \node[vertex] (c2) at (-1.2,-1.2) {$v$};
    \node[vertex] (c3) at (1.2,-1.2) {$w$};
    \draw[edge, line width=1pt] (c1) -- (c2) -- (c3) -- (c1);
    
    \foreach \y in {0.8, 1.3, 1.8} {
        \node[pendent] at (0,\y) {};
        \draw[edge] (c1) -- (0,\y);
    }
    \node[font=\small, draw=red!30, fill=red!5, rounded corners=2pt] at (0.6,1.3) {$p_1,\ldots,p_{n-3}$};
    
    \node[font=\small\bfseries, below] at (0,-2.2) {$S_n^+$};
    \draw[rounded corners=5pt, blue!50, line width=0.8pt] (-2.0,-2.0) rectangle (2.0,2.5);
\end{scope}
\hspace{-30mm}
\begin{scope}[xshift=5cm, yshift=0cm]
    \node[special] (c1) at (5,0) {$u$};
    \node[vertex] (c2) at (3.8,-1.2) {$v$};
    \node[vertex] (c3) at (6.2,-1.2) {$w$};
    \draw[edge, line width=1pt] (c1) -- (c2) -- (c3) -- (c1);
    
    \foreach \y in {0.8, 1.3, 1.8} {
        \node[pendent] at (5,\y) {};
        \draw[edge] (c1) -- (5,\y);
    }
    \node[font=\small, draw=red!30, fill=red!5, rounded corners=2pt] at (5.6,1.3) {$p_i$};
    
    \node[pendent] at (3.8,-2.0) (q1) {$q_1$};
    \node[pendent] at (6.2,-2.0) (r1) {$r_1$};
    \draw[edge] (c2) -- (q1);
    \draw[edge] (c3) -- (r1);
    
    \node[font=\small\bfseries, below] at (5,-2.8) {$C_n^*$};
    \draw[rounded corners=5pt, blue!50, line width=0.8pt] (3.0,-2.6) rectangle (7.0,2.5);
\end{scope}
\hspace{-35mm}
\begin{scope}[xshift=10cm, yshift=0cm]
    \node[vertex] (c1) at (10,0) {$u$};
    \node[vertex] (c2) at (8.8,-1.2) {$v$};
    \node[vertex] (c3) at (11.2,-1.2) {$w$};
    \node[special] (c4) at (10,-2.0) {$x$};
    \draw[edge, line width=1pt] (c1) -- (c2) -- (c4) -- (c3) -- (c1);
    
    \foreach \y in {0.8, 1.3} {
        \node[pendent] at (10,\y) {};
        \draw[edge] (c1) -- (10,\y);
    }
    \node[font=\small, draw=red!30, fill=red!5, rounded corners=2pt] at (10.6,1.0) {$p_a$};
    
    \foreach \y in {-2.8, -3.3} {
        \node[pendent] at (10,\y) {};
        \draw[edge] (c4) -- (10,\y);
    }
    \node[font=\small, draw=red!30, fill=red!5, rounded corners=2pt] at (10.6,-3.0) {$q_b$};
    
    \node[font=\small\bfseries, below] at (10,-4.0) {$B_n^*$};
    \draw[rounded corners=5pt, blue!50, line width=0.8pt] (8.0,-3.8) rectangle (12.0,2.5);
\end{scope}
\end{tikzpicture}
\caption{Special unicyclic graphs: \(S_n^+\), \(C_n^*\), and \(B_n^*\) with highlighted cycle vertices}
\label{fig:unicyclic_special}
\end{figure}
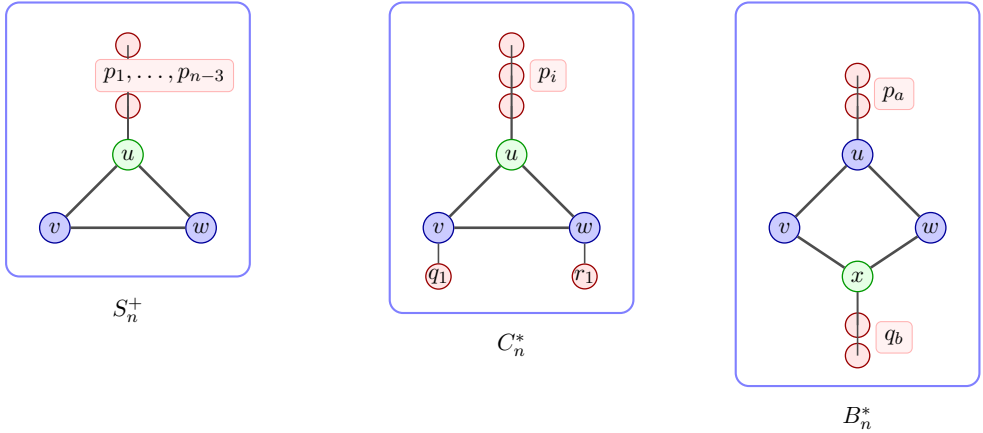

The following formulas give the ISI indices for these special graphs:
\begin{align}
\ISI(S_n^+) &= (n-3)f(1,n-1) + 2f(2,n-1) + f(2,2), \label{eq:Sn}\\
\ISI(A_n) &= (n-5)f(1,n-3) + 2f(2,n-3) + 3f(2,2), \label{eq:An}\\
\ISI(B_n^*) &= (n-4)f(1,n-2) + 2f(2,n-2) + 2f(2,2), \label{eq:Bn}\\
\ISI(C_n^*) &= (n-4)f(1,n-2) + f(2,n-2) + f(3,n-2) + f(2,3) + f(1,3), \label{eq:Cn}\\
\ISI(D_n^*) &= (n-5)f(1,n-2) + 3f(2,n-2) + f(2,2) + f(1,2), \label{eq:Dn}\\
\ISI(\mathcal{U}_{n,d}) &= (n-d-1)f(1,n-d+1) + 2f(2,n-d+1) \nonumber \\
&\quad + 
\begin{cases}
(n-5)f(2,2) + 3f(2,3) + f(1,2), & d \geq 5,\\[4pt]
2f(2,3) + f(1,3), & d = 4.
\end{cases} \label{eq:Un}
\end{align}

\subsection{Diameter \(d = 2\)}

\begin{theorem}
\label{thm:unicyclic_d2}
Let \(G \in \UU_{n,2}\) with \(n \geq 4\). If \(f(x,y)\) fulfills the following requirements:
\begin{enumerate}[label=(\roman*)]
\item \(f(x,y)\) is strictly increasing in \(x\);
\item \(f(x+y-1,1) > f(x,y)\) holds for \(x,y \geq 2\),
\end{enumerate}
then
\begin{equation}
\ISI(G) \leq \ISI(S_n^+) = (n-3)f(1,n-1) + 2f(2,n-1) + f(2,2)
\end{equation}
holds precisely when \(G \cong S_n^+\).
\end{theorem}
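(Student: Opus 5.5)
The plan is to reduce the statement to a finite structural classification of \(\UU_{n,2}\) and then make a direct comparison of index values using only conditions (i) and (ii) and the symmetry \(f(x,y)=f(y,x)\). No transformation lemma is needed, because the family \(\UU_{n,2}\) turns out to be tiny.

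\textbf{Step 1: classify \(\UU_{n,2}\).} Let \(G\in\UU_{n,2}\) have unique cycle \(C_g\).
\begin{itemize}
\item Since \(C_g\) is an isometric subgraph of a unicyclic graph, \(\lfloor g/2\rfloor\le 2\), so \(g\in\{3,4,5\}\).
\item If \(g=5\) and some vertex lies off the cycle, take a pendant vertex \(p\). Some cycle vertex is at distance at least \(2\) from the attachment point of \(p\), so it is at distance at least \(3\) from \(p\). Hence \(G=C_5\).
\item If \(g=4\), a pendant vertex attached to a cycle vertex \(v\) is at distance \(3\) from the cycle vertex opposite \(v\). Hence \(G=C_4\).
\item If \(g=3\), every vertex off the cycle must be adjacent to a cycle vertex, since otherwise its distance to some cycle vertex is at least \(3\). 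Two such pendant vertices attached to different cycle vertices are at distance \(3\). So all \(n-3\) of them hang from a single cycle vertex, which gives \(G\cong S_n^+\).
\end{itemize}
Thus \(\UU_{n,2}=\{S_n^+\}\) for \(n\ge 6\), \(\UU_{4,2}=\{S_4^+,C_4\}\), and \(\UU_{5,2}=\{S_5^+,C_5\}\).

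\textbf{Step 2: compare values for \(n=4\).} We have \(\BID(C_4)=4f(2,2)\) and \(\BID(S_4^+)=f(1,3)+2f(2,3)+f(2,2)\). By (i) and symmetry, \(f(2,3)>f(2,2)\). By (ii) with \(x=y=2\), \(f(3,1)>f(2,2)\). Adding these gives \(f(1,3)+2f(2,3)>3f(2,2)\), so \(\BID(S_4^+)>\BID(C_4)\).

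\textbf{Step 3: compare values for \(n=5\).} We have \(\BID(C_5)=5f(2,2)\) and \(\BID(S_5^+)=2f(1,4)+2f(2,4)+f(2,2)\). By (i), \(f(2,4)>f(2,2)\). By (i) followed by (ii), \(f(4,1)>f(3,1)>f(2,2)\). Summing gives the strict inequality \(\BID(S_5^+)>\BID(C_5)\).

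Together with Step 1, this shows that equality holds exactly for \(G\cong S_n^+\). The stated closed form of \(\BID(S_n^+)\) follows by counting edge types:
\begin{itemize}
\item \(n-3\) pendant edges with degree pair \((1,n-1)\);
\item two cycle edges with degree pair \((2,n-1)\);
\item one cycle edge with degree pair \((2,2)\).
\end{itemize}

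The only real obstacle is making the classification in Step 1 airtight, in particular the claim that in the \(g=3\) case no tree of depth \(2\) can hang off the triangle. That follows by measuring the distance from a depth-\(2\) vertex to a cycle vertex other than its root, which is at least \(3\). After that, Steps 2 and 3 are immediate from the hypotheses.
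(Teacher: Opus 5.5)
Your proposal is correct and follows the paper's route: identify the members of \(\UU_{n,2}\) as \(C_4\), \(C_5\) and \(S_n^+\), then show \(S_n^+\) beats the two cycles by direct comparison using (i) and (ii), in particular \(f(2,3)>f(2,2)\) and \(f(3,1)>f(2,2)\). You also supply what the paper leaves implicit: a proof of the classification via isometry of the cycle, the explicit \(C_5\) computation, and the restriction of the comparisons to the matching orders \(n=4\) and \(n=5\), where the paper instead compares \(S_n^+\) for arbitrary \(n\) against \(C_4\).
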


\begin{proof}
Since the diameter \(d = 2\), there are three possible structures for unicyclic graphs satisfying \(G \in \UU_{n,2}\), namely, \(G \cong C_4\), \(G \cong C_5\), and \(G \cong S_n^+\). If \(G \cong C_4\), then \(\ISI(C_4) = 4f(2,2)\). By conditions (i) and (ii), we obtain \(f(2,2) < f(2,3)\) and \(f(2,2) < f(1,3)\), respectively. Therefore,
\begin{align*}
\ISI(S_n^+) - \ISI(C_4) &= (n-3)f(1,n-1) + 2f(2,n-1) + f(2,2) - 4f(2,2) \\
&\geq f(1,3) + 2f(2,3) - 3f(2,2) > 0.
\end{align*}
Similarly, if \(G \cong C_5\), we can deduce \(\ISI(S_n^+) > \ISI(C_5) = 5f(2,2)\). Thus, the theorem is proved.
\end{proof}

\subsection{Diameter \(d = 3\)}

\begin{theorem}
\label{thm:unicyclic_d3}
Let \(G \in \UU_{n,3}\) with \(n \geq 5\). If \(f(x,y)\) fulfills the following requirements:
\begin{enumerate}[label=(\roman*)]
\item \(f(x,y)\) is strictly increasing in \(x\);
\item \(f(x+1,y-1) > f(x,y)\) holds for \(x \geq y\);
\item \(f(1,x)\) and \(f(2,x)\) are strictly convex downward in \(x\),
\end{enumerate}
then
\begin{equation}
\ISI(G) \leq \ISI(C_n^*) = (n-4)f(1,n-2) + f(2,n-2) + f(3,n-2) + f(2,3) + f(1,3)
\end{equation}
holds precisely when \(G \cong C_n^*\).
\end{theorem}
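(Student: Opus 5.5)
The plan is to first classify $\UU_{n,3}$ by the length $g$ of the unique cycle, and then compare each class with $C_n^*$. Since the diameter is $3$, we have $g\le 7$. Moreover $g\in\{6,7\}$ forces $G\cong C_6$ or $C_7$, since $\operatorname{diam}(C_g)=\lfloor g/2\rfloor$ and any pendant tree would increase the diameter. For $g=5$, attaching anything to the $5$-cycle gives diameter at least $3$; in fact a single pendant already gives diameter $3$, two pendants on adjacent cycle vertices give diameter $3$, and so on. I would check each such configuration and treat them as a small finite family of degree-sequence types. The main classes are $g=4$ and $g=3$. Every vertex off the cycle lies within distance $1$ of the cycle in the $g=4$ case. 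In the $g=3$ case it lies within distance $2$, and depth-$2$ vertices are restricted as in $D_n(a,b)$. With $g=4$ one gets $B_n(a,b)$ (pendants at two adjacent cycle vertices) and graphs like $A_n$-type ones with pendants at three or four cycle vertices; pendants at two opposite vertices are also allowed. With $g=3$ one gets $C_n(a,b,c)$ with $b\ge 1$, which is needed for diameter $3$, together with the $D_n(a,b)$-type graphs having one pendant path of length $2$.

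Within each family, I would first show that moving pendant vertices toward the highest-degree vertex increases $\BID$, using the same shifting computation as Transformation~1 in the proof of Theorem~\ref{thm:trees}. That argument is driven by condition (ii) $f(x+1,y-1)>f(x,y)$ for $x\ge y$, together with the monotonicity (i) and the convexity of $f(1,\cdot)$ and $f(2,\cdot)$ from (iii). I would stop each shift as soon as it would destroy diameter $3$. This reduces each family to its extremal member:
\begin{itemize}
\item $C_n(a,b,c)\to C_n(n-4,1,0)=C_n^*$, since at least one pendant must remain on a second cycle vertex;
\item $B_n(a,b)\to B_n^*$;
\item the four-cycle graphs with pendants at three or four vertices $\to A_n$ and its analogues;
\item $D_n(a,b)\to D_n^*$;
\item the $g=5$ graphs $\to$ the $5$-cycle with all pendants at one vertex.
\end{itemize}
For the graphs that contain a path of length $2$ hanging off the cycle, I would use the path-lifting Lemma~\ref{lem:lifting} where applicable. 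Its hypotheses are implied by (i)--(iii) here, just as condition (ii) of Lemma~\ref{lem:lifting} is used in Theorem~\ref{thm:unicyclic_d2}.

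Next I would compare $C_n^*$ directly with $B_n^*$, $D_n^*$, $A_n$, the $g=5$ candidates and $C_6,C_7$, using the explicit formulas \eqref{eq:Cn}, \eqref{eq:Bn}, \eqref{eq:Dn} and \eqref{eq:An}. For example,
\[
\ISI(C_n^*)-\ISI(B_n^*)=f(3,n-2)+f(2,3)+f(1,3)-f(2,n-2)-2f(2,2),
\]
which is positive since $f(3,n-2)>f(2,n-2)$ and $f(2,3)>f(2,2)$ by (i), and $f(1,3)>f(2,2)$ by (ii). For $D_n^*$, the difference is
\[
f(1,n-2)+f(3,n-2)+f(2,3)+f(1,3)-2f(2,n-2)-f(2,2)-f(1,2).
\]
Its sign follows by pairing terms: $f(3,n-2)\ge f(2,n-2)$ and $f(2,3)>f(2,2)$ by (i), and $f(1,3)>f(1,2)$ by symmetry together with (i). The one remaining term needs $f(1,n-2)$ versus $f(2,n-2)$, which (i) orders the wrong way. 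To handle this, I would rewrite $f(2,n-2)$ using (ii) as less than $f(1,n-1)$ and then invoke the convexity in (iii). I expect this term to need the most care. For $A_n$, the degrees are only $n-3$, so monotonicity in $n$ combined with (ii) should suffice.

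The main obstacle is making the structural enumeration exhaustive for small $n$, namely $n=5,6$, where the families overlap and coincide, and in the $D_n^*$ comparison just described. If the general-$f$ argument stalls there, I would fall back on the concrete $f(x,y)=xy/(x+y)$ and check the inequality directly. Equality holds only when every inequality in the chain is an equality, which forces $G\cong C_n^*$.
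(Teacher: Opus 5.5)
Your overall route (classify by girth, push pendants together inside each family, then compare the survivors with $C_n^*$) is the same as the paper's. The comparisons with $C_6$, $C_7$, the $C_5$-graphs and $B_n^*$ do go through under (i)--(iii).

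\textbf{The gap: the comparison with $D_n^*$.} This is the step you flagged yourself, $\BID(C_n^*)>\BID(D_n^*)$. With $m=n-2$ the difference is
\[
f(1,m)+f(3,m)-2f(2,m)+[f(2,3)-f(2,2)]+[f(1,3)-f(1,2)].
\]
Your repair fails. After using $f(3,m)>f(2,m)$ and $f(2,m)<f(1,m+1)$ (from (ii)), you still have to absorb $-[f(1,m+1)-f(1,m)]$. Convexity of $f(1,\cdot)$ gives $f(1,m+1)-f(1,m)\ge f(1,3)-f(1,2)$, so it works against you, and nothing else in (i)--(iii) bounds this increment. What the step actually needs is $f(1,m)+f(3,m)\ge 2f(2,m)$, i.e.\ convexity of $f(\cdot,m)$ for large $m$. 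Condition (iii) only concerns $f(1,\cdot)$ and $f(2,\cdot)$, so it says nothing about this.

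In fact no argument from (i)--(iii) can succeed. Take $f(x,y)=x^2+y^2+5\sqrt{xy}$:
\begin{itemize}
\item It satisfies (i).
\item It satisfies (ii) for $x\ge y\ge 2$ (the only range involving genuine degrees), since $f(x+1,y-1)-f(x,y)=(x-y+1)\bigl(2-5/(\sqrt{xy}+\sqrt{(x+1)(y-1)})\bigr)>0$.
\item It satisfies (iii), since $\frac{d^2}{dx^2}f(1,x)=2-\tfrac54x^{-3/2}>0$ and $\frac{d^2}{dx^2}f(2,x)=2-\tfrac{5\sqrt2}{4}x^{-3/2}>0$ for $x\ge1$.
\end{itemize}
Yet
\[
\BID(C_n^*)-\BID(D_n^*)=2+5\bigl(\sqrt6+\sqrt3-\sqrt2\bigr)-5\bigl(2\sqrt2-1-\sqrt3\bigr)\sqrt{n-2}\approx 15.84-0.482\sqrt{n-2},
\]
which is negative for $n\ge 1083$, although $D_n^*\in\UU_{n,3}$. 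So the statement itself needs an extra hypothesis.

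The paper's proof has exactly the same hole. It asserts $f(3,n-2)+f(1,n-2)>2f(2,n-2)$ ``from condition (iii)'', but that is convexity in the other variable. Adding condition (iv) of Theorem~\ref{thm:trees} ($\phi(x,y)=f(x,y)-f(x-1,y)$ increasing in $x$) gives $\phi(3,m)>\phi(2,m)$, which is precisely the missing inequality.

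\textbf{Smaller points.}
\begin{itemize}
\item The computation of Transformation~1 that you want to reuse invokes (iv), which is not assumed here. Under (i)--(iii) the shifts do work in $B_n(a,b)$, in the $C_5$-graphs and in $C_n(a,b,0)$. There the remaining cycle neighbours have degree $2$, and convexity of $f(2,\cdot)$ balances the loss. But in $C_n(a,b,c)$ with $c\ge 1$, every shift lowers some edge value $f(s,t)$ with $s,t\ge 3$, and (iii) does not control that loss; again, (iv) would.
\item Your enumeration includes graphs outside $\UU_{n,3}$. On $C_4$, pendants at two opposite vertices, and hence at three or four vertices, already force diameter $4$. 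So girth $4$ gives only $B_n(a,b)$. The formula given for $A_n$ actually belongs to $C_5$ with $n-5$ pendants at one vertex.
\item The fallback to $xy/(x+y)$ cannot prove a statement about all admissible $f$. That function violates (ii), and for it $C_n^*$ is not even maximal: $C_{12}(3,3,3)$ has value $15$, versus about $13.2$ for $C_{12}^*$.
\end{itemize}
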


\begin{proof}
Since \(d = 3\), there are exactly five structures for unicyclic graphs satisfying \(G \in \UU_{n,3}\), namely, \(G \cong C_6\) or \(C_7\), \(G \cong A_n\), \(G \cong B_n(a,b)\), \(G \cong C_n(a,b,c)\), and \(G \cong D_n(a,b)\). We discuss four cases.

\textbf{Case 1.} \(G \cong C_6\), \(C_7\), or \(G \cong A_n\). Similar to Theorem \ref{thm:unicyclic_d2}, by conditions (i) and (ii), direct calculations yield that
\[
\ISI(C_n^*) > \ISI(C_6), \quad \ISI(C_n^*) > \ISI(C_7), \quad \ISI(C_n^*) > \ISI(A_n).
\]

\textbf{Case 2.} \(G \cong B_n(a,b)\). Let \(d_G(u) = a\), \(d_G(v) = b\), and let \(N_G(v) = \{w, u, v_1, v_2, \ldots, v_b\}\).

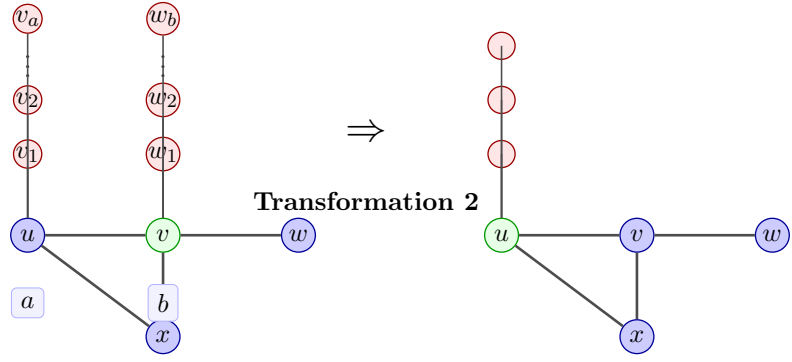
\begin{figure}[h]
\centering
\begin{tikzpicture}
\node[vertex] (u) at (0,0) {$u$};
\node[special] (v) at (2,0) {$v$};
\node[vertex] (w) at (4,0) {$w$};
\node[vertex] (x) at (2,-1.5) {$x$};
\draw[edge, line width=1pt] (u) -- (v) -- (w);
\draw[edge, line width=1pt] (v) -- (x);
\draw[edge, line width=1pt] (x) -- (u);

\node[pendent] (p1) at (0,1.2) {$v_1$};
\node[pendent] (p2) at (0,2.0) {$v_2$};
\node[font=\small] at (0,2.6) {$\vdots$};
\node[pendent] (pa) at (0,3.2) {$v_a$};
\draw[edge] (u) -- (p1);
\draw[edge] (u) -- (p2);
\draw[edge] (u) -- (pa);

\node[pendent] (q1) at (2,1.2) {$w_1$};
\node[pendent] (q2) at (2,2.0) {$w_2$};
\node[font=\small] at (2,2.6) {$\vdots$};
\node[pendent] (qb) at (2,3.2) {$w_b$};
\draw[edge] (v) -- (q1);
\draw[edge] (v) -- (q2);
\draw[edge] (v) -- (qb);

\node[font=\small, draw=blue!30, fill=blue!5, rounded corners=2pt] at (0,-1.0) {$a$};
\node[font=\small, draw=blue!30, fill=blue!5, rounded corners=2pt] at (2,-1.0) {$b$};

\node at (5,1.5) {\Large$\Rightarrow$};
\node[font=\small\bfseries] at (5,0.5) {Transformation 2};

\begin{scope}[xshift=7cm]
\node[special] (u2) at (0,0) {$u$};
\node[vertex] (v2) at (2,0) {$v$};
\node[vertex] (w2) at (4,0) {$w$};
\node[vertex] (x2) at (2,-1.5) {$x$};
\draw[edge, line width=1pt] (u2) -- (v2) -- (w2);
\draw[edge, line width=1pt] (v2) -- (x2);
\draw[edge, line width=1pt] (x2) -- (u2);

\node[pendent] at (0,1.2) {};
\node[pendent] at (0,2.0) {};
\node[pendent] at (0,2.8) {};
\draw[edge] (u2) -- (0,1.2);
\draw[edge] (u2) -- (0,2.0);
\draw[edge] (u2) -- (0,2.8);
\end{scope}
\end{tikzpicture}
\caption{Graph \(B_n(a,b)\) and Transformation 2}
\label{fig:Bn}
\end{figure}

\textbf{Transformation 2:} \(G' = G - vv_1 + uv_1\).

We have that \(G' \in \UU_{n,3}\). It follows from condition (i) that \(f(b,1) < f(a+1,1)\). Condition (iii) gives \(f(2,b) + f(2,a) < f(2,a+1) + f(2,b-1)\) and \(f(1,b) + f(1,a) < f(1,a+1) + f(1,b-1)\). By virtue of condition (ii), \(f(a+1,b-1) > f(a,b)\). Therefore,
\begin{align*}
\ISI(G') - \ISI(G) &= f(1,a+1) - f(1,b) + f(2,b-1) - f(2,b) \\
&\quad + f(2,a+1) - f(2,a) + f(a+1,b-1) - f(a,b) \\
&\quad + \sum_{i=1}^{a-2}(f(1,a+1) - f(1,a)) \\
&\quad + \sum_{i=2}^{b-2}(f(1,b-1) - f(1,b)) > 0.
\end{align*}
Repeatedly applying Transformation 2 to \(B_n(a,b)\) yields \(B_n^*\), with \(\ISI(B_n^*) > \ISI(G)\).

On the other hand, by conditions (iii) and (i), we obtain
\begin{align*}
\ISI(C_n^*) - \ISI(B_n^*) &= (n-4)f(1,n-2) + f(2,n-2) + f(3,n-2) \\
&\quad + f(2,3) + f(1,3) - (n-4)f(1,n-2) \\
&\quad - 2f(2,n-2) - 2f(2,2) \\[4pt]
&> f(3,n-2) - f(2,n-2) + f(3,2) - f(2,2) > 0.
\end{align*}

\textbf{Case 3.} \(G \cong C_n(a,b,c)\). Similar to Case 2, we perform Transformation 2 repeatedly to obtain \(C_n^*\) with \(\ISI(C_n^*) > \ISI(G)\).

\textbf{Case 4.} \(G \cong D_n(a,b)\). This case requires more detailed analysis. Let \(d_G(u) = a\), \(d_G(v) = b\), and denote \(N_G(v) = \{w, u, v_1, v_2, \ldots, v_a\}\). We perform Transformation 2 and obtain \(G' \in \UU_{n,3}\).

\textbf{Case 4.1.} If \(a-2 \geq b\). Condition (i) yields \(f(a+1,1) - f(b,1) > 0\) and \(f(a+1,2) - f(a,2) > 0\). As a result of condition (ii), \(f(a+1,b-1) - f(a,b) > 0\). It follows from condition (iii) that \(f(1,b) + f(1,a) < f(1,a+1) + f(1,b-1)\). Consequently,
\begin{align*}
\ISI(G') - \ISI(G) &= f(a+1,1) - f(b,1) + 2(f(a+1,2) - f(a,2)) \\
&\quad + f(a+1,b-1) - f(a,b) \\
&\quad + \sum_{i=1}^{a-3}(f(a+1,1) - f(a,1)) \\
&\quad + \sum_{i=2}^{b}(f(1,b-1) - f(1,b)) \\[4pt]
&> \sum_{i=1}^{a-3}(f(1,a+1) - f(a,1)) + \sum_{i=2}^{b}(f(b-1,1) - f(1,b)) \\[4pt]
&> (a-2-b)(f(1,a+1) - f(a,1) + f(b-1,1) - f(1,b)) > 0.
\end{align*}
Repeatedly applying Transformation 2 to \(D_n(a,b)\) yields \(D_n^*\) with \(\ISI(D_n^*) > \ISI(G)\).

On the other hand, by condition (i), \(f(1,2) + f(2,2) < f(1,3) + f(2,3)\). From condition (iii), it follows that \(f(3,n-2) + f(1,n-2) > 2f(2,n-2)\). Thus,
\begin{align*}
\ISI(C_n^*) - \ISI(D_n^*) &= (n-4)f(1,n-2) + f(2,n-2) + f(3,n-2) \\
&\quad + f(3,2) + f(1,3) - (n-5)f(1,n-2) \\
&\quad - 3f(2,n-2) - f(2,2) - f(1,2) \\[4pt]
&= f(1,n-2) + f(3,n-2) - 2f(2,n-2) > 0.
\end{align*}

\textbf{Case 4.2.} If \(a-2+1 \leq b\). Combining \(a \geq 3\) and \(a + b = n\), we deduce \(\frac{n-1}{2} \leq b \leq n-3\). Therefore,
\[
\ISI(D_n(a,b)) = (b-1)f(1,b) + f(b,n-b) + 2f(2,n-b) + (n-3-b)f(1,n-b) + f(2,2).
\]
By condition (i) with \(\frac{n-1}{2} \leq b \leq n-3\), we derive that \((n-4)f(1,n-2) - (b-1)f(1,b) > (n-3-b)f(1,n-2)\). Condition (ii) also gives \(f(1,3) > f(2,2)\) and \(f(b,n-b) < f(2,n-2)\). Using condition (i) and \(\frac{n-1}{2} \leq b \leq n-3\), it follows that \(f(2,n-2) < f(3,n-2)\) and \(f(2,n-b) < f\left(2,\frac{n+1}{2}\right)\). Furthermore, by condition (iii), \(f(2,n-2) + f(2,3) - 2f\left(2,\frac{n+1}{2}\right) > 0\). Consequently,
\begin{align*}
\ISI(C_n^*) - \ISI(D_n(a,b)) &= (n-4)f(1,n-2) + f(2,n-2) + f(3,n-2) \\
&\quad + f(2,3) + f(1,3) - (b-1)f(1,b) \\
&\quad - f(b,n-b) - 2f(2,n-b) \\
&\quad - (n-3-b)f(1,n-b) - f(2,2) \\[4pt]
&> (n-3-b)(f(1,n-2) - f(1,n-b)) + f(3,n-2) \\
&\quad + f(2,3) - 2f(2,n-b) \\[4pt]
&> f(2,n-2) + f(2,3) - 2f\left(2,\frac{n+1}{2}\right) > 0.
\end{align*}

Thus, based on all the above cases, if \(G \not\cong C_n^*\), we have \(\ISI(C_n^*) > \ISI(G)\).
\end{proof}

\subsection{Diameter \(d \geq 4\)}

We first consider the special case when \(n = d+2\).

\begin{theorem}
\label{thm:unicyclic_d4_nd2}
Let \(G \in \UU_{n,d}\) with \(d \geq 4\) and \(n = d+2\). If \(f(x,y)\) satisfies the following requirements:
\begin{enumerate}[label=(\roman*)]
\item \(f(x,y)\) is strictly increasing in \(x\);
\item \(f(x+y-1,1) > f(x,y)\) holds when \(x,y = 2\);
\item \(\phi(3,x) = f(3,x) - f(2,x)\) is strictly decreasing in \(x\),
\end{enumerate}
then \(\ISI(G) \leq \ISI(\mathcal{U}_{d+2,d})\) holds precisely when \(G \cong \mathcal{U}_{d+2,d}\).
\end{theorem}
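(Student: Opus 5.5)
The plan is to enumerate \(\UU_{d+2,d}\) completely and then compare each member with \(\mathcal{U}_{d+2,d}\) by a direct edge-by-edge count.

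\textbf{Structure.} Let \(P^d=v_1v_2\ldots v_{d+1}\) be a diameter path of \(G\), and let \(w\) be the unique vertex off it.
\begin{itemize}
\item Since \(P^d\) is a shortest path, no chord \(v_iv_j\) with \(j\ge i+2\) exists.
\item \(G\) has \(d+2\) edges and \(P^d\) has only \(d\), so both remaining edges are incident with \(w\). Hence \(w\) is adjacent to exactly two path vertices \(v_i,v_j\) with \(i<j\).
\item Again because \(P^d\) is geodesic, \(j-i\le 2\).
\end{itemize}
Up to the reversal \(v_k\mapsto v_{d+2-k}\), \(G\) is therefore one of two types:
\begin{itemize}
\item a \emph{triangle type} \(G_i^{(3)}\), with \(N(w)=\{v_i,v_{i+1}\}\), \(1\le i\le d\);
\item a \emph{quadrangle type} \(G_i^{(4)}\), with \(N(w)=\{v_i,v_{i+2}\}\), \(1\le i\le d-1\).
\end{itemize}
All of these have diameter \(d\); the end vertices \(v_1,v_{d+1}\) are allowed to lie on the cycle. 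Adding the extra \(P_2\)'s (here none, as \(n-d-2=0\)), \(\mathcal{U}_{d+2,d}\) is \(G_2^{(4)}\): the 4-cycle \(v_2v_3v_4w\) with \(v_1\) pendant at \(v_2\). This matches the \(d=4\) and \(d\ge5\) branches of \eqref{eq:Un}.

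\textbf{Computation.} In each graph the degree sequence is \(2\)'s and \(1\)'s apart from at most two degree-3 vertices, so \(\ISI(G)=(d+2)f(2,2)+(\text{correction})\). The correction involves only the few edges near the cycle and near the ends of \(P^d\). I would write the corrections explicitly as short sums in \(f(1,2),f(1,3),f(2,3),f(3,3),f(2,2)\), and compare each with that of \(\mathcal{U}_{d+2,d}\), which is \(f(1,3)+4f(2,3)+\dots\). The sub-cases are whether the cycle touches an end of the path (\(i=1\) or \(i=d\), resp.\ \(i=1\) or \(i=d-1\)) and whether it is at distance one from an end. The tools are:
\begin{itemize}
\item Condition (i) gives \(f(1,2)<f(1,3)\) and \(f(2,2)<f(2,3)<f(3,3)\).
\item Condition (ii) with \(x=y=2\) gives \(f(3,1)>f(2,2)\).
\item Condition (iii), \(\phi(3,3)<\phi(3,2)\), gives \(f(3,3)+f(2,2)<2f(2,3)\).
\end{itemize}

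\textbf{Cases.}
\begin{itemize}
\item \emph{Interior triangle} (both degree-3 vertices interior and adjacent): it has the edge \(f(3,3)\) together with four \(f(2,3)\) and two \(f(1,2)\). Against \(\mathcal{U}\) this reduces to \(f(3,3)+f(2,2)+f(1,2)\) versus \(2f(2,3)+f(1,3)\), which (iii) plus (i) settles.
\item \emph{Triangle at an end} (\(i=1\)): only one vertex of degree 3, so three \(f(2,3)\) and one \(f(1,2)\). This loses to \(\mathcal{U}\) by (i) and (ii).
\item \emph{Interior 4-cycles with \(i\ge3\)}: these have two \(f(1,2)\) edges, whereas \(\mathcal{U}\) trades one for \(f(1,3)\) and gains an \(f(2,3)\) in place of an \(f(2,2)\). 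They lose by (i).
\item \emph{4-cycle at the end} (\(i=1\)): it has a single degree-3 vertex and loses by (i) and (ii).
\end{itemize}
Equality can occur only for \(G_2^{(4)}\) or its mirror image, both isomorphic to \(\mathcal{U}_{d+2,d}\).

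\textbf{Main obstacle.} The delicate point is the small-\(d\) bookkeeping. When \(d=4\) or \(d=5\) the cycle can be simultaneously near both ends, so the two degree-3 vertices may both be adjacent to pendants; an example is \(G_2^{(3)}\), which yields terms \(f(1,3)\) and \(f(3,3)\) at once. I would treat \(d=4\) (and \(d=5\) where needed) separately by listing the finitely many graphs and checking each inequality with the same three tools. The key worry is the comparison \(f(3,3)+f(1,3)\) versus \(2f(2,3)\), which needs (ii) and (iii) combined rather than (i) alone.
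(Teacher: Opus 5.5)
Your structural step is correct: every $G\in\UU_{d+2,d}$ is some $G_i^{(3)}$ or $G_i^{(4)}$. Direct enumeration is a legitimate route, and a more explicit one than the paper's, which only says the proof ``follows a similar structure'' via graph transformations and gives no details.

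The gap is in the case computations. It comes from a missing ingredient: condition (iii) must be used at $x=1$. By symmetry $\phi(3,1)=f(1,3)-f(1,2)$, so (iii) gives the chain $\phi(3,1)>\phi(3,2)>\phi(3,3)$. A correct edge count shows that this chain is exactly what is needed. Up to the reversal you mention,
\[
\BID(\mathcal{U}_{d+2,d})-\BID(G)=
\begin{cases}
\phi(3,1)-\phi(3,2), & G=G_i^{(4)},\ 3\le i\le d-3,\\
\phi(3,1)-\phi(3,3), & G=G_i^{(3)},\ 3\le i\le d-2, \text{ or } G=G_2^{(3)} \text{ with } d=4,\\
\phi(3,2)-\phi(3,3), & G=G_2^{(3)},\ d\ge 5.
\end{cases}
\]
The cycle-through-an-end graphs $G_1^{(3)}$ and $G_1^{(4)}$ give $f(1,3)+2f(2,3)-3f(2,2)$ for $d\ge5$ and $2f(1,3)+f(2,3)-2f(2,2)-f(1,2)$ for $d=4$. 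Conditions (i) and (ii) handle these, as you say.

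Two of your verdicts do not survive this count.

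\emph{Interior 4-cycle.} Both degree-3 vertices have three neighbours of degree 2, so this graph has six $f(2,3)$-edges. $\mathcal{U}_{d+2,d}$ has only five, plus one more $f(2,2)$-edge, which is the opposite of what you wrote. The difference is $f(1,3)-f(1,2)-(f(2,3)-f(2,2))$, and (i) alone cannot give its sign. The ISI function satisfies (i) yet gives $\tfrac{1}{12}-\tfrac{1}{5}<0$.

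\emph{Interior triangle.} The difference is $f(1,3)+f(2,3)-f(3,3)-f(1,2)$. Your expression is larger than this by $f(2,3)-f(2,2)$. The true difference does not follow from (i), (ii) and $\phi(3,2)>\phi(3,3)$; you need $\phi(3,1)>\phi(3,3)$.

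\emph{Your ``key worry''.} The comparison $f(3,3)+f(1,3)$ versus $2f(2,3)$ never arises in $\UU_{d+2,d}$. Even if it did, (ii) and (iii) could not decide it: (iii) bounds $f(3,3)+f(2,2)$ from above, while (ii) bounds $f(1,3)$ from below.

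With the displayed differences every comparison is strict, so the inequality and uniqueness both follow. Note that the $x=1$ instance of (iii) is exactly what the genuine ISI function violates. So the statement must be read with $\BID$ for a general $f$; for ISI itself, $G_2^{(3)}$ already beats $\mathcal{U}_{d+2,d}$ for every $d\ge4$.
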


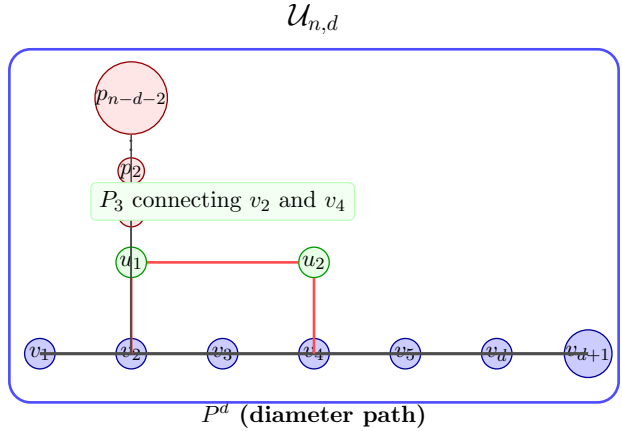
\begin{figure}[h]
\centering
\begin{tikzpicture}[scale=0.9]
\foreach \x/\label in {0/{$v_1$}, 1.5/{$v_2$}, 3/{$v_3$}, 4.5/{$v_4$}, 6/{$v_5$}, 7.5/{$v_d$}, 9/{$v_{d+1}$}} {
    \node[vertex] at (\x,0) (\label) {\label};
}

\draw[edge, line width=1.2pt] (0,0) -- (1.5,0) -- (3,0) -- (4.5,0) -- (6,0) -- (7.5,0) -- (9,0);

\node[special] (u1) at (1.5,1.5) {$u_1$};
\node[special] (u2) at (4.5,1.5) {$u_2$};
\draw[edge, line width=1pt, red!70] (1.5,0) -- (u1);
\draw[edge, line width=1pt, red!70] (4.5,0) -- (u2);
\draw[edge, line width=1pt, red!70] (u1) -- (u2);

\node[pendent] (p1) at (1.5,2.3) {$p_1$};
\node[pendent] (p2) at (1.5,3.0) {$p_2$};
\node[font=\small] at (1.5,3.6) {$\vdots$};
\node[pendent] (pn) at (1.5,4.2) {$p_{n-d-2}$};
\draw[edge] (1.5,0) -- (p1);
\draw[edge] (1.5,0) -- (p2);
\draw[edge] (1.5,0) -- (pn);

\node[font=\small\bfseries] at (4.5,-1.0) {$P^d$ (diameter path)};
\node[font=\small, draw=green!30, fill=green!5, rounded corners=2pt] at (3,2.5) {$P_3$ connecting $v_2$ and $v_4$};

\draw[rounded corners=8pt, thick, blue!70, line width=1pt] (-0.5,-0.8) rectangle (9.5,5.0);
\node[font=\large\bfseries] at (4.5,5.5) {$\mathcal{U}_{n,d}$};
\end{tikzpicture}
\caption{The extremal graph \(\mathcal{U}_{n,d}\) for \(d \geq 4\)}
\label{fig:Un_d}
\end{figure}

The proof of Theorem \ref{thm:unicyclic_d4_nd2} follows a similar structure to the previous theorems, using graph transformations to show that \(\mathcal{U}_{d+2,d}\) is the unique extremal graph.

Now we present several lemmas that are essential for the general case \(d \geq 4\) and \(n \geq d+3\).

\begin{lemma}
\label{lem:pendant_existence}
Let \(G \in \UU_{n,d}^{\max}\) with \(4 \leq d \leq n-3\). Suppose \(C_k\) is the unique cycle and \(P^d = v_1v_2\ldots v_d v_{d+1}\) is a diameter path. Let \(v \in PV(G)\) with \(v v_2 \in E(G)\) or \(v v_d \in E(G)\). If \(f(x,y)\) fulfills the following requirements:
\begin{enumerate}[label=(\roman*)]
\item \(f(x,y)\) is strictly increasing with \(x\);
\item \(f(x+y-1,1) > f(x,y)\) holds for \(x,y \geq 2\);
\item \(\phi(2,x) = f(2,x) - f(1,x)\) is strictly decreasing in \(x\),
\end{enumerate}
then \(|V(C_k) \cap V(P^d)| \geq 2\).
\end{lemma}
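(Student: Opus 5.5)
We argue by contradiction. Suppose $G\in\UU_{n,d}^{\max}$ with $4\le d\le n-3$, and suppose $|V(C_k)\cap V(P^d)|\le 1$. Since $G$ is unicyclic, the cycle $C_k$ is joined to the diameter path by a unique shortest path $Q$ (possibly of length $0$). Let $v_t\in V(P^d)$ be the endpoint of $Q$ on $P^d$ and let $w\in V(C_k)$ be its endpoint on the cycle. Because $v_1$ and $v_{d+1}$ are pendant, we have $2\le t\le d$. The hypothesis gives a pendant vertex $v$ attached to $v_2$, say, with $v\ne v_1$. The idea is to use this spare pendant vertex to ``close'' a cycle through two vertices of $P^d$, which will strictly increase $\BID$ while keeping the diameter equal to $d$.

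The transformation is chosen according to the position of the cycle.

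In the main case, $C_k$ is a triangle $wxy$ hanging at $w=v_t$, or hanging further out through $Q$. First we apply the path lifting transformation of Lemma~\ref{lem:lifting}, whose conditions (i) and (ii) coincide with ours. If $Q$ has length at least $1$, lifting along $Q$ (taking $P_l=Q$ with $v_l=w$ and $v_1=v_t$, both of degree at least $2$) moves the cycle onto $v_t$ and strictly increases $\BID$. This uses convexity of $f(2,x)$ in Lemma~\ref{lem:lifting}. That condition is not among our hypotheses, so instead I would do the local computation directly, using (i) and (ii) exactly as in the proof of Theorem~\ref{thm:unicyclic_d2}, where the ``pendant'' edge replacing a longer branch gains $f(x+y-1,1)>f(x,y)$.

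Once the cycle touches $P^d$ only at $v_t$, we pick a cycle neighbour $x$ of $v_t$ and another cycle vertex $y$ adjacent to $x$. We then re-embed: delete the edge $xy$ (when $k\ge 4$), or delete $v_tx$ and reattach so that the cycle uses $v_{t-1}v_t$ or $v_tv_{t+1}$. A cleaner option is to delete the cycle edge $v_tx$ and add $xv_{t+1}$ (or $xv_{t-1}$). The result is still unicyclic, the cycle now contains $v_t$ and $v_{t+1}$, and the diameter is unchanged, because distances from $x$ and $y$ to the ends of $P^d$ do not exceed those of $v_1$ and $v_{d+1}$ once $2\le t\le d-1$. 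For $t=d$ we use $v_{d-1}$ instead.

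We then compute $\BID(G')-\BID(G)$. Only the degrees of $v_t$ (down by $1$) and $v_{t\pm1}$ (up by $1$) change, so the difference is a sum of terms $f(\cdot,d(v_{t+1})+1)-f(\cdot,d(v_{t+1}))$ and $-(f(\cdot,d(v_t))-f(\cdot,d(v_t)-1))$. Condition (iii), that $\phi(2,x)$ is decreasing, together with (i) controls these differences, and the pendant vertex at $v_2$ guarantees that $d(v_2)\ge 3$. This means the degree being decreased sits at a larger value, so its marginal loss is smaller than the gain. Where the signs do not cooperate, we instead move the pendant vertex $v$ from $v_2$ into the cycle (delete $vv_2$, and subdivide or attach as needed), again comparing via (ii).

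The main obstacle is the sign check in the exchange step. A blanket monotonicity argument fails because the degree of $v_t$ may be large while the degree of $v_{t+1}$ is $2$. I would handle this by splitting into the cases $d(v_t)\ge d(v_{t+1})$ and $d(v_t)<d(v_{t+1})$, choosing the neighbour $v_{t-1}$ or $v_{t+1}$ of smaller degree, and reducing each case to a single inequality that follows from (iii) combined with (ii) applied at $x=y=2$. Every case yields a graph in $\UU_{n,d}$ with strictly larger $\BID$, contradicting maximality; hence $|V(C_k)\cap V(P^d)|\ge 2$.
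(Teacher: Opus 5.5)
\textbf{There is a genuine gap: the core case is not handled.} The paper gives no proof of this lemma (it only refers to the literature), so your argument has to stand on its own. Your preliminary reduction is fine, and easier than you make it. A single-edge lift (Lemma~\ref{lem:lifting} with $l=2$, applied to the edge of $Q$ at $w$) is controlled by (i) and (ii) alone, leaves $P^d$ intact and keeps all distances at most $d$. So it already contradicts maximality whenever $|Q|\ge 1$. The same lift along a cycle edge at $v_t$ shortens the cycle and disposes of $k\ge 4$. (That $v_1,v_{d+1}$ are pendant also needs a line: a neighbour of $v_1$ off $P^d$ would lie at distance $d+1$ from $v_{d+1}$.)

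Everything therefore hinges on a triangle $v_txy$ meeting $P^d$ only in $v_t$, and there your re-embedding $G-v_tx+xv_{t\pm1}$ breaks down. Your diameter claim ignores what hangs at $x$ and $y$. Take $d=4$, $P^4=v_1\cdots v_5$, a pendant $v$ at $v_2$, the triangle $v_3xy$, and pendants $x'$ at $x$ and $y'$ at $y$, so $G\in\UU_{10,4}$. Rerouting $x$ (or $y$) to $v_4$ puts $x'$ (or $y'$) at distance $5$ from $v_1$. Rerouting to $v_2$ puts it at distance $5$ from $v_5$. None of your options stays in $\UU_{10,4}$.

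The sign check also fails, and not just for lack of detail. Your principle that lowering the larger degree $d(v_t)$ and raising $d(v_{t\pm1})$ is profitable is a concavity property in the first variable. Hypotheses (i)--(iii) do not provide it, and (ii) in fact rewards concentrating degree. Concretely, $f(x,y)=\sqrt{x^2+y^2}$ satisfies (i)--(iii): $(x+y-1)^2+1-x^2-y^2=2(x-1)(y-1)>0$, and $\phi(2,x)=3/(\sqrt{x^2+4}+\sqrt{x^2+1})$ is strictly decreasing. Take $d=4$, a pendant $v$ at $v_2$ and the bare triangle $v_3xy$, so $n=8$. Your rule picks $v_4$ (degree $2<3$), and $G-v_3x+xv_4\in\UU_{8,4}$. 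Yet $\BID$ drops from $2\sqrt{10}+5+3\sqrt{20}+\sqrt5+\sqrt8\approx 29.81$ to $3\sqrt{10}+2\sqrt{18}+2\sqrt{13}+\sqrt8\approx 28.01$. So no contradiction arises, and your promised reduction to ``a single inequality from (iii) and (ii) at $x=y=2$'' cannot be carried out for this choice.

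Your main move also never uses the pendant $v$ at $v_2$, which is the one hypothesis specific to this lemma. Meanwhile (iii) is tailored to turning a degree-$1$ vertex into a degree-$2$ one: the edge $vv_2$ then gains exactly $\phi(2,d(v_2))$. What is missing is an explicit transformation of the triangle case that keeps the diameter and closes a new cycle through two vertices of $P^d$. (Deleting $xy$, for instance, preserves every distance to $v_t$.) You also need an actual derivation of the $\BID$ increase from (i)--(iii). Your fallback, ``move $v$ into the cycle, subdivide or attach as needed'', is where that argument would have to go, and it is not supplied.
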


\begin{lemma}
\label{lem:pendant_nonempty}
Let \(G \in \UU_{n,d}^{\max}\) with \(4 \leq d \leq n-3\). Suppose \(C_k\) is the unique cycle and \(P^d = v_1v_2\ldots v_d v_{d+1}\) is a diameter path. If \(f(x,y)\) fulfills the following requirements:
\begin{enumerate}[label=(\roman*)]
\item \(f(x,y)\) is strictly increasing with \(x\);
\item \(f(x+y-1,1) > f(x,y)\) holds for \(x,y \geq 2\);
\item \(\phi(2,x) = f(2,x) - f(1,x)\) is strictly decreasing in \(x\),
\end{enumerate}
then there exists a pendant vertex \(v \in PV(G)\) such that \(G - v \in \UU_{n-1,d}\).
\end{lemma}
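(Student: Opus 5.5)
We argue by contradiction. Suppose every pendant vertex $v$ of $G$ fails the conclusion: deleting $v$ either drops the diameter or leaves the vertex count inconsistent with $\UU_{n-1,d}$. Deleting a pendant vertex never disconnects $G$ and never destroys the unique cycle, so $G-v$ is always a connected unicyclic graph on $n-1$ vertices. Removing a vertex cannot shorten distances between the remaining vertices, so the diameter of $G-v$ is at least $d$. A pendant vertex is never interior to a shortest path, so removing it cannot create a longer distance among the rest; hence the diameter of $G-v$ is at most $d$. The only way $G-v\notin\UU_{n-1,d}$ is therefore that $v$ is an endpoint of every diameter path. It suffices to find one pendant vertex that lies off some diameter path $P^d$, since then $P^d$ survives in $G-v$.

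Fix a diameter path $P^d=v_1\cdots v_{d+1}$ and count vertices. Since $n\ge d+3$, at least two vertices lie outside $P^d$. Suppose, for contradiction, that every pendant vertex of $G$ is an endpoint of every diameter path, i.e.\ $PV(G)\subseteq\{v_1,v_{d+1}\}$ for each diameter path. The vertices off $P^d$ form trees hanging from $P^d\cup C_k$. Any such tree containing an edge not on the cycle ends in a leaf, which is a pendant vertex off $P^d$ and gives an immediate contradiction. So every vertex off $P^d$ lies on $C_k$, and $G$ is exactly $C_k\cup P^d$ with at least two cycle vertices off the path.

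We split according to $|V(C_k)\cap V(P^d)|$. If this number is at most $1$, the cycle together with the path cannot have diameter $d$ without creating a leaf elsewhere, because a cycle attached at a single vertex is fine as a block. We then build a competitor $G'\in\UU_{n,d}$ with larger $\BID$, contradicting $G\in\UU_{n,d}^{\max}$. Concretely, we shorten the cycle off the path and reattach the freed vertex as a pendant at $v_2$ or at a high-degree vertex. The comparison uses condition (i) for increased degrees and condition (ii) in the form $f(x+y-1,1)>f(x,y)$ to replace a $2$–$2$ cycle edge by a pendant edge. It also uses condition (iii), that $\phi(2,x)$ is decreasing, to control the neighbour terms whose partner changes degree between $1$ and $2$. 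This mirrors the path lifting of Lemma~\ref{lem:lifting}.

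If instead $|V(C_k)\cap V(P^d)|\ge2$ and the arc of $C_k$ off the path has at least two internal vertices, we do the same thing. We remove one internal vertex $w$ of that arc, contracting the arc so that the distances along $P^d$ are unchanged; this must be checked, since the arc must not become a shortcut. We then attach $w$ as a pendant to $v_2$. The $\BID$ difference is handled as a sum of three kinds of term, each positive:
\begin{itemize}
\item terms $f(2,2)\to f(1,\deg)$, which are positive by (ii) and (i);
\item degree increments at $v_2$, which are positive by (i);
\item boundary corrections, which are positive by (iii).
\end{itemize}
The new graph has a pendant vertex off the diameter path, and $G'$ is still in $\UU_{n,d}$ with $\BID(G')>\BID(G)$, a contradiction.

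The main obstacle is the geometric case analysis: ensuring that the modified graph keeps diameter exactly $d$, so that neither shortcuts across the cycle nor a longer path through the new pendant arise, in every configuration of how $C_k$ meets $P^d$. This is also where Lemma~\ref{lem:pendant_existence} is used to guarantee that the cycle meets the path in at least two vertices. The inequality verifications themselves are routine once the configurations are fixed.
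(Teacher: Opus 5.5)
The paper gives no proof of this lemma; it only defers to the works of Zhang et al.\ and Gao. Your argument therefore has to stand on its own.

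Your reduction is sound in substance. A pendant vertex is interior to no path, so $G-v$ keeps all remaining distances, and $G-v\notin\UU_{n-1,d}$ exactly when $v$ lies in every diametral pair. Your intermediate claim $\mathrm{diam}(G-v)\ge d$ is false (that is precisely what can fail), but you do not need it. Hence in a counterexample $PV(G)\subseteq\{v_1,v_{d+1}\}$.

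The case $|V(C_k)\cap V(P^d)|\le 1$ then needs no surgery. Here $G$ is not a cycle, so it has a pendant vertex, which must be $v_1$ or $v_{d+1}$ and is therefore adjacent to $v_2$ or $v_d$; Lemma~\ref{lem:pendant_existence} then gives a contradiction. You should handle this case that way, for two reasons: your proposed move there (``shorten the cycle'') is impossible for $C_k=C_3$, and your claim that every vertex off $P^d$ lies on $C_k$ is false when $C_k$ and $P^d$ are disjoint. What remains is $G=P^d\cup C_k$, where $C_k$ shares a segment $v_a\cdots v_b$ with $P^d$, the complementary arc $Q$ carries the other $n-d-1\ge 2$ vertices, and $b-a\le |Q|$.

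\textbf{The gap.} The genuine gap is the competitor in this remaining case, which is the entire content of the lemma. Your move (delete an internal vertex $w$ of $Q$, join its two neighbours, hang $w$ at $v_2$) does not stay in $\UU_{n,d}$ in general. You explicitly leave this unchecked, and it fails in both directions.
\begin{enumerate}
\item If $|Q|=b-a$, shortening $Q$ shortcuts $P^d$. For $d=4$, $n=7$, let $G$ be the path $v_1\cdots v_5$ plus the arc $v_2x_1x_2v_5$. Then $\mathrm{diam}(G)=4$, $PV(G)=\{v_1\}$ and $G-v_1\cong C_6$, so $G$ is a genuine bad configuration. Your $G'$, however, is $C_5$ with two pendant vertices at one vertex, which has diameter $3$.
\item For $G=C_{2d+1}$, your $G'$ is $C_{2d}$ plus a pendant vertex, which has diameter $d+1$ wherever it is attached.
\end{enumerate}

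Such configurations need a different competitor. In the example above, $\BID(\mathcal{U}_{7,4})-\BID(G)=2f(1,4)+2f(2,4)-4f(2,2)>0$ by (i) and (ii). More generally, you need a complete case analysis of how $C_k$ sits on $P^d$: balanced or unbalanced arcs, and whether $v_1$ or $v_{d+1}$ lies on $C_k$. In each case you must exhibit a verified member of $\UU_{n,d}$ with larger $\BID$. Until that is done, no contradiction is reached.

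Where your move is valid, (i) and (ii) alone already give $\BID(G')>\BID(G)$. The net change is $f(1,d_G(v_2)+1)-f(2,2)$ plus positive increments, so condition (iii) plays no role there.
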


\begin{lemma}
\label{lem:main_lemma}
Let \(G \in \UU_{n,d}^{\max}\) with \(4 \leq d \leq n-3\). Define \(A = \{v \in PV(G) \mid G - v \in \UU_{n-1,d}\}\), \(B = \{u \in N_G(v) \mid v \in A\}\), and \(C_G(u) = \{w \in N_G(u) \mid u \in B, d_G(w) \geq 2\}\) with \(|C_G(u)| \geq 1\). Given that \(f(x,y)\) fulfills the following requirements:
\begin{enumerate}[label=(\roman*)]
\item \(f(x,y)\) is strictly increasing in \(x\);
\item \(f(x+1,y-1) > f(x,y)\) holds for \(x \geq y \geq 2\);
\item \(f(1,x)\) and \(f(2,x)\) are strictly convex downward in \(x\);
\item \(\phi(x,y) = f(x,y) - f(x-1,y)\) is strictly increasing with \(x\) and strictly decreasing with \(y\),
\end{enumerate}
then there is a vertex \(v_0 \in PV(G)\) for which \(v_0 \in A\) and \(|C_G(N_G(v_0))| \geq 2\).
\end{lemma}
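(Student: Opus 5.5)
We argue by contradiction. By Lemma \ref{lem:pendant_nonempty}, the set \(A\) is nonempty, so \(B\) is nonempty as well. Suppose that for every \(v \in A\), the unique neighbour \(u\) of \(v\) satisfies \(|C_G(u)| = 1\). Then every non-pendant neighbour structure around \(u\) is as thin as possible: \(u\) has exactly one neighbour \(w\) of degree at least \(2\), and all its other neighbours are pendant vertices. We will modify \(G\) locally to obtain \(G' \in \UU_{n,d}\) with \(\BID(G') > \BID(G)\), contradicting \(G \in \UU_{n,d}^{\max}\).

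\textbf{Step 1 (locating \(u\)).} Since \(u\) has only one non-pendant neighbour, \(u\) is not on the cycle \(C_k\), because a cycle vertex has two neighbours of degree at least \(2\). If \(u\) were an interior vertex \(v_i\) of \(P^d\) with \(3 \le i \le d-1\), it would have two neighbours \(v_{i-1}, v_{i+1}\) of degree at least \(2\). Hence either \(u \in \{v_2, v_d\}\) with \(v_1\) (resp. \(v_{d+1}\)) pendant, or \(u\) lies off \(P^d\) as the end of a pendant star hanging from \(w\). In the second case, moving all pendant neighbours of \(u\) to \(w\) is a path lifting move in the sense of Lemma \ref{lem:lifting}. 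Choosing the edge \(uw\) together with a pendant edge, and using condition (i) together with \(f(x+y-1,1) > f(x,y)\) (which follows from (ii) by iteration), this move increases \(\BID\) without changing \(n\) or \(d\). That would be a contradiction, so we may assume \(u = v_2\), with all neighbours of \(v_2\) except \(v_3\) pendant. Symmetrically, every vertex of \(B\) is \(v_2\) or \(v_d\).

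\textbf{Step 2 (transformation).} Since \(G\) is unicyclic and \(d \ge 4\), \(C_k\) meets \(P^d\) in some vertices \(v_i, \dots, v_j\) with \(3 \le i\) (by Lemma \ref{lem:pendant_existence} it meets it in at least two vertices), and \(v_3\) has degree \(y = d_G(v_3) \ge 2\). Let \(x = d_G(v_2) \ge 3\), using \(n \ge d+3\) and Step 1 to guarantee some vertex of \(A\). Take the cycle vertex \(v_i\) nearest \(v_2\). The plan is to move the pendant neighbours of \(v_2\), other than \(v_1\), onto \(v_3\), or alternatively to relocate the cycle toward \(v_2\), producing a graph resembling \(\mathcal{U}_{n,d}\) in which \(v_2\) has two neighbours of degree at least \(2\). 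The diameter is preserved since \(v_1\) stays pendant at \(v_2\). We then compute \(\BID(G') - \BID(G)\) edge by edge. The changed terms are the pendant edges at \(v_2\) and \(v_3\), the edge \(v_2v_3\), and the edges from \(v_3\) and \(v_i\) to their other neighbours. We then telescope using \(\phi\): condition (iv) compares increments \(\phi(x+1,\cdot)\) with \(\phi(y,\cdot)\), exactly as in Case 1 of Theorem \ref{thm:trees}. Condition (ii) handles the swapped edge \(f(x+1,y-1)\) versus \(f(x,y)\), and the convexity in (iii) handles the redistributed pendant sums \(f(1,\cdot)\) and \(f(2,\cdot)\).

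\textbf{Main obstacle.} The hard part is choosing the transformation uniformly so that \(G'\) stays in \(\UU_{n,d}\) (the diameter neither drops nor grows) while the sign of the difference is forced in all configurations. The delicate configurations are \(i = 3\), where \(v_3\) is on the cycle, versus \(i \ge 4\), and the case where both \(v_2\) and \(v_d\) belong to \(B\). I would first try the single-edge move \(G' = G - v_2p + v_3p\) or its reverse, always moving pendants toward the higher-degree endpoint as in Transformation 1. I would then handle separately the case where \(d_G(v_3) = 2\), where Lemma \ref{lem:lifting} applied to the induced path from \(v_2\) to the cycle should directly give a contradiction.
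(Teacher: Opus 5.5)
Your Step 1 is essentially sound. Under the contradiction hypothesis, a vertex $u\in B$ cannot lie on $C_k$, and cannot be $v_1$ or $v_i$ with $3\le i\le d-1$. If $u\notin V(P^d)$, lifting the star at $u$ onto its unique non-pendant neighbour $w$ keeps $P^d$ and the diameter, and raises $\BID$ by (i) and the iterated form of (ii). The paper gives no proof of this lemma, only a one-sentence description deferring to the literature, so there is nothing detailed to compare against.

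The gap is Step 2, which carries the whole content of the lemma and contains no argument. You name two alternative plans and describe in words which edge terms would change. You then concede that finding a transformation that stays in $\UU_{n,d}$ and forces the sign is an unresolved obstacle. No $G'$ is fixed, no difference $\BID(G')-\BID(G)$ is computed, and the subcases you flag ($i=3$ versus $i\ge 4$, $d=4$, both $v_2,v_d\in B$) are left open.

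Even your set-up fails: $d_G(v_2)\ge 3$ does not follow from $n\ge d+3$. Take $d=6$, $P^6=v_1\cdots v_7$, and the cycle $v_3v_4c_1c_2c_3c_4v_3$.
\begin{itemize}
\item Then $n=11$ and the only pendants are $v_1,v_7$.
\item $A=\{v_1\}$, because $d(c_3,v_7)=6$ survives the deletion of $v_1$.
\item Hence $B=\{v_2\}$, $|C_G(v_2)|=1$ and $d_G(v_2)=2$.
\end{itemize}
So there are no pendants at $v_2$ to move, and all surplus vertices sit on the cycle.

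The concrete fallbacks you mention also fail. Under your hypothesis every pendant vertex off $P^d$ lies in $A$, since deleting it leaves $P^d$ and all other distances intact. Hence every such pendant hangs at $v_2$ or $v_d$, and in particular $v_3$ carries no pendant. So the ``reverse'' move $G-v_3p+v_2p$ is vacuous. The move $G-v_2p+v_3p$ is Transformation 1 run backwards, and by (ii) and (iv) it lowers $\BID$ (e.g.\ when $d_G(v_3)=2$ and $d_G(v_2)\ge 4$).

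Path lifting along $v_2v_3\cdots v_i$ is not available either. In one orientation it moves $v_1$ to $v_i$; in the other it moves $v_{i+1}$ and the cycle to $v_2$. Either way the distance between the ends of $P^d$ drops to $d+2-i<d$, so $G'$ generally leaves $\UU_{n,d}$.

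What is missing is to use the rigidity your hypothesis actually yields. $G$ is $P^d\cup C_k$ with $C_k\cap P^d=v_i\cdots v_j$, $i\ge 3$, plus pendants at $v_2$ and (if $v_d\notin C_k$) at $v_d$, with every other vertex of degree at most $3$. For this finite-parameter family you need an explicit transformation that puts $v_2$ on the cycle without creating a shortcut between $v_1$ and $v_{d+1}$. One candidate: remove the edges of the off-path arc of $C_k$, make one arc vertex $m$ adjacent to $v_2$ and $v_4$, and attach the remaining arc vertices as pendants at $v_2$. You must then verify from (i)--(iv) that $\BID$ strictly increases in every subcase, including $d_G(v_2)=2$, $d=4$, and pendants also present at $v_d$.
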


The proofs of Lemmas \ref{lem:pendant_existence} through \ref{lem:main_lemma} involve detailed case analysis and graph transformations, similar to those in \cite{zhang2025abs, gao2025extremal}. The key idea is to show that the extremal graph must have a pendant vertex whose removal preserves the diameter and that the neighbor of this pendant vertex must have at least two non-pendant neighbors.

\begin{theorem}
\label{thm:unicyclic_d4_general}
Let \(G \in \UU_{n,d}\) with \(d \geq 4\) and \(n \geq d+2\). If \(f(x,y)\) fulfills the following requirements:
\begin{enumerate}[label=(\roman*)]
\item \(f(x,y)\) is strictly increasing in \(x\);
\item \(f(x+1,y-1) > f(x,y)\) holds for \(x \geq y \geq 2\);
\item \(f(1,x)\) and \(f(2,x)\) are strictly convex downward in \(x\);
\item \(\phi(x,y) = f(x,y) - f(x-1,y)\) is strictly increasing with \(x\) and strictly decreasing with \(y\),
\end{enumerate}
then \(\ISI(G) \leq \ISI(\mathcal{U}_{n,d})\) holds precisely when \(G \cong \mathcal{U}_{n,d}\).
\end{theorem}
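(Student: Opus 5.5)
We argue by induction on \(n\), starting from \(n=d+2\); the base case is exactly Theorem \ref{thm:unicyclic_d4_nd2}. The hypotheses (i)--(iv) here imply the conditions used there and in Lemmas \ref{lem:pendant_existence}--\ref{lem:main_lemma}. Condition (ii) of those lemmas follows by iterating (ii) and then using (i). Decreasing \(\phi\) in its second argument follows from (iv).

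For the inductive step, fix \(n\geq d+3\) and let \(G\in\UU_{n,d}^{\max}\). By Lemma \ref{lem:pendant_nonempty} the set \(A\) is nonempty. By Lemma \ref{lem:main_lemma} we may pick \(v_0\in A\) whose neighbour \(u\) has at least two non-pendant neighbours. Put \(x=d_G(u)\geq 3\). Then \(G-v_0\in\UU_{n-1,d}\), and
\[
\BID(G)=\BID(G-v_0)+f(1,x)+\sum_{w\in N_G(u)\setminus\{v_0\}}\bigl(f(x,d(w))-f(x-1,d(w))\bigr).
\]
By the induction hypothesis \(\BID(G-v_0)\leq\BID(\mathcal{U}_{n-1,d})\), with equality iff \(G-v_0\cong\mathcal{U}_{n-1,d}\). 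It therefore suffices to show that the increment \(\Delta(G,v_0)=f(1,x)+\sum_{w}\phi(x,d(w))\) is at most the increment \(\BID(\mathcal{U}_{n,d})-\BID(\mathcal{U}_{n-1,d})\). By formula (\ref{eq:Un}), the latter equals
\[
(n-d-1)f(1,n-d+1)-(n-d-2)f(1,n-d)+2\bigl(f(2,n-d+1)-f(2,n-d)\bigr).
\]
This is the increment obtained by attaching a pendant to the hub \(v_2\), which has degree \(n-d+1\) and two neighbours of degree \(2\) in the extremal graph.

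To bound \(\Delta(G,v_0)\), write \(\Delta=f(1,x)+(x-1-s)\phi(x,1)+\sum_{w\text{ non-pendant}}\phi(x,d(w))\), where \(s\geq 2\) is the number of non-pendant neighbours of \(u\) other than \(v_0\). Since \(\phi\) decreases in \(y\), each non-pendant term is at most \(\phi(x,2)\). Since \(\phi\) increases in \(x\), the whole expression is maximized by making \(x\) as large as possible and \(s\) as small as possible, that is, \(s=2\).

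Next we bound \(x\). Removing pendants cannot reduce \(x\) below the degree forced by the diameter structure. Counting vertices, \(u\) has at most \(n-d-2\) pendant neighbours besides what the diameter path and cycle force, so \(x\leq n-d+1\) when \(s=2\). With \(x=n-d+1\) and both non-pendant neighbours of degree \(2\), the bound is exactly the \(\mathcal{U}_{n,d}\) increment. Any strict deviation, whether larger \(s\), a smaller \(x\), or a neighbour of degree at least \(3\), gives a strict inequality by the strict monotonicity in (iv) together with (i).

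Equality then forces \(G-v_0\cong\mathcal{U}_{n-1,d}\) and forces \(v_0\) to attach at its hub \(v_2\), so \(G\cong\mathcal{U}_{n,d}\).

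The main obstacle is the degree bound \(x\leq n-d+1\) together with the equality analysis. One must check that a vertex with \(s\geq 2\) non-pendant neighbours in a unicyclic graph of diameter \(d\) cannot have degree exceeding \(n-d+1\). This needs \(d-1\) path vertices plus one extra cycle vertex to be non-pendant non-neighbours of \(u\). One must also handle the trade-off when \(s\geq3\) permits a larger \(x\), where the loss \(\phi(x,1)-\phi(x,2)\) has to be compared against the gain in \(x\). We would first try to show directly that \(x+s\leq n-d+3\) by the vertex count and then use convexity (iii) to compare \(f(1,\cdot)\) terms. The small case \(d=4\) needs separate treatment, since there the hub's cycle neighbours have degree \(3\) pattern differently in (\ref{eq:Un}).
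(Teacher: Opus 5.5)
Your proposal is the paper's proof essentially step for step. It has the same induction from $n=d+2$ via Theorem~\ref{thm:unicyclic_d4_nd2}, and the same pendant $v_0\in A$ from Lemma~\ref{lem:main_lemma} whose neighbour $u$ has at least two non-pendant neighbours. It uses the same one-vertex decomposition of $\BID(G)$, bounds the terms $\phi(x,d(w))$ by $\phi(x,2)$ or $\phi(x,1)$ via (iv), relies on the degree bound $d_G(u)\le n-d+1$ (which the paper merely asserts), and has the same equality analysis. Two of your choices are, if anything, tidier than the written proof: restricting to $G\in\UU_{n,d}^{\max}$ before invoking Lemma~\ref{lem:main_lemma}, and pushing $x$ up to $n-d+1$ via the monotonicity of $\phi$ in its first argument rather than via (iii). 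The one piece you leave open is the degree bound, and there your plan points the wrong way.

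The bound has nothing to do with $s$ or the cycle: in any connected graph of diameter $d$, every vertex has degree at most $n-d+1$. Since $P^d$ is a geodesic, a vertex $u\in V(P^d)$ has no neighbours on $P^d$ other than its (at most two) path-neighbours, because a chord would shorten $P^d$. A vertex $u\notin V(P^d)$ adjacent to $v_i$ and $v_j$ with $j\ge i+3$ would give the path $v_1\cdots v_iuv_j\cdots v_{d+1}$ of length $d+2-(j-i)\le d-1$. So its neighbours on $P^d$ lie in some window $\{v_i,v_{i+1},v_{i+2}\}$. Either way at least $d-2$ vertices of $P^d$ are distinct from and non-adjacent to $u$, so $d_G(u)\le (n-1)-(d-2)$. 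Then $F(x,s)=f(1,x)+(x-1-s)\phi(x,1)+s\phi(x,2)$ is strictly decreasing in $s$ and strictly increasing in $x$ (using $\phi>0$ from (i)). Hence $F(x,s)\le F(x,2)\le F(n-d+1,2)$, and there is no trade-off to analyse.

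Do not try to prove $x+s\le n-d+3$; it is false. For example, take $u=v_3$ when $v_3wv_4$ is a triangle and $n-d-2$ pendants hang at $v_3$; this gives $x=n-d+1$ and $s=3$. Counting pendant neighbours against the non-pendant vertices forced by the path and the cycle, as you sketch, is what makes $s$ seem to matter; count non-neighbours on $P^d$ instead.

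Finally, $d=4$ needs no separate inductive step. The $n$-independent tail of (\ref{eq:Un}) (read $(d-5)f(2,2)$ for the misprinted $(n-5)f(2,2)$) cancels in $\BID(\mathcal{U}_{n,d})-\BID(\mathcal{U}_{n-1,d})$. The only extra check is in the equality case at $n=d+3$, where $v_4$ of $\mathcal{U}_{d+2,d}$ also has degree $3=n-d$. For $d\ge5$ it has three non-pendant neighbours and is excluded. For $d=4$ it is the image of $v_2$ under an automorphism, so $G\cong\mathcal{U}_{n,d}$ either way.
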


\begin{proof}
We prove this theorem by induction on \(n\). First, combining conditions (i), (ii), and (iv) with Theorem \ref{thm:unicyclic_d4_nd2}, the conclusion is valid when \(n = d+2\). Now assume the theorem holds for \(n-1\), i.e., for any \(G' \in \UU_{n-1,d}\),
\[
\ISI(G') \leq \ISI(\mathcal{U}_{n-1,d}) = (n-d-2)f(1,n-d) + 2f(2,n-d) + (d-5)f(2,2) + 3f(2,3) + f(1,2).
\]

On the other hand, the conditions satisfy Lemma \ref{lem:main_lemma}, so there must exist a vertex \(v \in PV(G)\) for which \(G - v \in \UU_{n-1,d}\). Let \(u \in N_G(v)\), and there exist two edges \(uu_1, uu_2 \in E(G)\) with \(d_G(u_1) \geq 2\) and \(d_G(u_2) \geq 2\), where \(3 \leq d_G(u) \leq n-d+1\). In this case, we may assume \(N_G(u) = \{v, u_1, u_2, \ldots, u_{d_G(u)-1}\}\). Let \(G' = G - v\), and it follows that \(G' \in \UU_{n-1,d}\).

Now consider two cases: \(d = 4\) and \(d \geq 5\).

\textbf{Case 1:} \(d \geq 5\). By condition (iv), \(f(x,y) - f(x-1,y) \leq f(x,1) - f(x-1,1)\) for all \(y \geq 1\). Thus,
\begin{align*}
\ISI(G) &= \ISI(G') + f(1,d_G(u)) \\
&\quad + \sum_{i=1}^{d_G(u)-1} [f(d_G(u), d_G(u_i)) - f(d_G(u)-1, d_G(u_i))] \\[4pt]
&\leq \ISI(G') + f(1,d_G(u)) + 2[f(d_G(u),2) - f(d_G(u)-1,2)] \\
&\quad + \sum_{i=3}^{d_G(u)-1} [f(d_G(u), d_G(u_i)) - f(d_G(u)-1, d_G(u_i))] \\[4pt]
&\leq \ISI(G') + f(1,d_G(u)) + 2[f(d_G(u),2) - f(d_G(u)-1,2)] \\
&\quad + (d_G(u)-3)[f(d_G(u),1) - f(d_G(u)-1,1)].
\end{align*}

Since \(d_G(u) \leq n-d+1\), by condition (iii), \(f(2,d_G(u)) - f(2,d_G(u)-1) \leq f(2,n-d+1) - f(2,n-d)\), and \(f(1,d_G(u)) - f(1,d_G(u)-1) \leq f(1,n-d+1) - f(1,n-d)\). Combined with condition (i), we have
\begin{align*}
\ISI(G) &\leq \ISI(G') + f(1,n-d+1) + 2(f(2,n-d+1) - f(2,n-d)) \\
&\quad + (d_G(u)-3)(f(1,n-d+1) - f(1,n-d)) \\[4pt]
&\leq \ISI(G') + (n-d-1)f(1,n-d+1) + 2f(2,n-d+1) \\
&\quad - 2f(2,n-d) - (n-d-2)f(1,n-d).
\end{align*}

Substituting the induction hypothesis,
\begin{align*}
\ISI(G) &\leq (n-d-2)f(1,n-d) + 2f(2,n-d) + (d-5)f(2,2) \\
&\quad + 3f(2,3) + f(1,2) + (n-d-1)f(1,n-d+1) \\
&\quad + 2f(2,n-d+1) - 2f(2,n-d) - (n-d-2)f(1,n-d) \\[4pt]
&= (n-d-1)f(1,n-d+1) + 2f(2,n-d+1) \\
&\quad + (d-5)f(2,2) + 3f(2,3) + f(1,2) \\[4pt]
&= \ISI(\mathcal{U}_{n,d}).
\end{align*}

This holds precisely when \(d_G(u) = n-d+1\), \(G' \in \UU_{n-1,d}\), \(d_G(v) = 1\), \(d_G(u_1) = d_G(u_2) = 2\), and \(d_G(u_i) = 1\) for \(3 \leq i \leq d_G(u)-1\). That is, \(G \cong \mathcal{U}_{n,d}\).

\textbf{Case 2:} \(d = 4\). The proof is completely analogous to the case where \(d \geq 5\), except that \((d-5)f(2,2) + 3f(2,3) + f(1,2)\) is replaced with \(2f(2,3) + f(1,3)\). Following the same derivation, we obtain \(\ISI(G) \leq \ISI(\mathcal{U}_{n,4})\), with equality precisely when \(G \cong \mathcal{U}_{n,4}\).

Thus, the theorem is proved.
\end{proof}

\section{Application to the Inverse Sum Indeg Index}

In this section, we verify that the inverse sum indeg (ISI) index satisfies the sufficient conditions required in the theorems. For the ISI index, we have \(f(x,y) = \frac{xy}{x+y}\).

The mathematical properties of the ISI function have been studied extensively. Recent work by Ahmad and Das \cite{ahmad2025general} on the general Sombor index and by Ali et al. \cite{ali2022abs} on the ABS index provides a framework for analyzing such functions. Our verification follows similar techniques.

\begin{table}[h]
\centering
\caption{Properties of the ISI function \(f(x,y) = \frac{xy}{x+y}\)}
\begin{tabular}{|c|c|c|}
\hline
\textbf{Property} & \textbf{Verification} & \textbf{Satisfied?} \\
\hline
Strictly increasing in \(x\) & \(\frac{\partial f}{\partial x} = \frac{y^2}{(x+y)^2} > 0\) & Yes  \\
\hline
\(f(1,x)\) convex downward & \(f(1,x+1)-f(1,x) = \frac{1}{(x+1)(x+2)}\) decreasing & Yes  \\
\hline
\(f(2,x)\) convex downward & \(f(2,x+1)-f(2,x) = \frac{2}{(x+2)(x+3)}\) decreasing & Yes \\
\hline
\(f(x+1,y-1) > f(x,y)\) & \(\frac{(x+1)(y-1)}{x+y} - \frac{xy}{x+y} = \frac{y-x-1}{x+y} < 0\) & No  (reverse) \\
\hline
\(f(x+y-1,1) > f(x,y)\) & \(\frac{x+y-1}{x+y} - \frac{xy}{x+y} = \frac{-(x-1)(y-1)}{x+y} < 0\) & No  (reverse) \\
\hline
\(\phi(2,x)\) decreasing & \(\phi(2,x) = \frac{x^2}{(2+x)(1+x)}\) increasing & No  \\
\hline
\(\phi(3,x)\) decreasing & \(\phi(3,x) = \frac{x^2}{(3+x)(2+x)}\) increasing & No \\
\hline
\end{tabular}
\label{tab:isi_properties}
\end{table}

From Table 1, we observe that the ISI index satisfies the monotonicity and convexity conditions but reverses some of the inequality conditions. However, this does not affect the extremal graphs – it simply means that the transformations must be applied in the opposite direction. This phenomenon has also been observed in studies of other topological indices \cite{gutman2021sombor, chen2022sombor}.

Given these calculations, we can state the following result:

\begin{theorem}
\label{thm:isi_results}
For the inverse sum indeg (ISI) index defined by \(f(x,y) = \frac{xy}{x+y}\), the following extremal results hold:

\begin{enumerate}
\item For trees \(T \in \TT_{n,d}\) with \(d \geq 3\) and \(n \geq d+3\), the maximum ISI index is attained by the tree \(T_{n,d}^*\), and
\[
\ISI(T) \leq \ISI(T_{n,d}^*) = (n-d)f(1,n-d+1) + (d-3)f(2,2) + f(2,n-d+1) + f(1,2).
\]

\item For unicyclic graphs \(G \in \UU_{n,2}\) with \(n \geq 4\), the maximum ISI index is attained by \(S_n^+\), and
\[
\ISI(G) \leq \ISI(S_n^+) = (n-3)f(1,n-1) + f(2,2) + 2f(2,n-1).
\]

\item For unicyclic graphs \(G \in \UU_{n,3}\) with \(n \geq 5\), the maximum ISI index is attained by \(C_n^*\), and
\[
\ISI(G) \leq \ISI(C_n^*) = (n-4)f(1,n-2) + f(2,n-2) + f(3,n-2) + f(2,3) + f(1,3).
\]

\item For unicyclic graphs \(G \in \UU_{n,d}\) with \(d \geq 4\) and \(n \geq d+2\), the maximum ISI index is attained by \(\mathcal{U}_{n,d}\), and
\[
\ISI(G) = (n-d-1)f(1,n-d+1) + 2f(2,n-d+1) + 
\begin{cases}
(n-5)f(2,2) + 3f(2,3) + f(1,2), & d \geq 5,\\[4pt]
2f(2,3) + f(1,3), & d = 4.
\end{cases}
\]
\end{enumerate}
\end{theorem}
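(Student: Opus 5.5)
The natural plan would be to read off each of the four items from Theorems \ref{thm:trees}, \ref{thm:unicyclic_d2}, \ref{thm:unicyclic_d3} and \ref{thm:unicyclic_d4_general}, after checking conditions (i)--(iv) for \(f(x,y)=\frac{xy}{x+y}\). Table~\ref{tab:isi_properties} already shows this cannot be done directly. Condition (ii) in both forms, \(f(x+1,y-1)>f(x,y)\) and \(f(x+y-1,1)>f(x,y)\), fails, and the monotonicity required of \(\phi\) goes the wrong way. The remark that the transformations ``must be applied in the opposite direction'' does not rescue the argument. Reversing the inequalities reverses the conclusion of every transformation step, for instance Lemma~\ref{lem:lifting} and Transformation~1. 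The reversed steps say that moving pendant vertices \emph{onto} a single high-degree vertex \emph{decreases} \(\ISI\). So the first step of my plan is to test item~1 on the smallest case before attempting any general proof.

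\textbf{Test case.} Take \(d=3\). Every tree in \(\TT_{n,3}\) is a double star. The tree \(T_{n,3}^*\) is the double star with centres of degrees \(n-2\) and \(2\). The alternative is the balanced double star, with two adjacent centres each of degree about \(n/2\).

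\begin{itemize}
\item For \(T_{n,3}^*\), the \(n-3\) pendant edges at the big centre contribute about \(n-3\), and the edges \(f(n-2,2)\) and \(f(2,1)\) together contribute about \(2+\tfrac23\). This gives \(\ISI(T_{n,3}^*)\approx n\).
\item For the balanced double star, the pendant edges contribute about \(n-2\), while the central edge contributes \(f(n/2,n/2)=n/4\). This gives roughly \(\tfrac54 n\).
\end{itemize}

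Concretely, for \(n=10\):
\[
\ISI(T_{10,3}^*)=7\cdot\tfrac{8}{9}+\tfrac{16}{10}+\tfrac{2}{3}\approx 8.49,
\]
whereas the double star with centres of degree \(5\) and \(5\) has
\[
8\cdot\tfrac{5}{6}+\tfrac{25}{10}\approx 9.17 .
\]
Hence item~1, as stated, is false.

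\textbf{Unicyclic items.} By the same mechanism, an edge between two high-degree vertices is worth about half their degree, so I expect the unicyclic items with \(d\ge 3\) to fail for large \(n\) as well. For \(d=3\), I would compare \(C_n(a,b,c)\) with balanced \(a,b,c\) against \(C_n^*\). With \(a=b=c\approx n/3\), the three cycle edges contribute about \(n/2\) in total, which should beat \(C_n^*\). For \(d\ge4\), the analogous construction spreads the pendant vertices over two or three adjacent vertices of the diametral path or cycle.

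\textbf{What survives.} Item~2 remains true, and for a trivial reason. For \(n\ge 6\), \(\UU_{n,2}=\{S_n^+\}\). For \(n=4,5\) the only competitors are \(C_4\) and \(C_5\), and a direct computation settles these, for example \(4f(2,2)=4\) against \(\ISI(S_4^+)\). Also, the correct extremal trees for \(\ISI\) with fixed diameter should balance the degrees along the diametral path rather than concentrate them.

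\textbf{Plan and main obstacle.} My plan is therefore not to prove Theorem~\ref{thm:isi_results} as stated. I will record the explicit double-star counterexample to item~1 and the analogous computation against item~3. I will prove item~2 directly. The main obstacle, namely a valid transformation argument for \(\ISI\), cannot be overcome with the sufficient conditions of Sections~2--3, because they are precisely the ones \(\ISI\) violates.
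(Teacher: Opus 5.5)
You are right that the statement is false, and the paper's own proof does not hold up. That proof consists only of the assertion that, although $\ISI$ violates several of the sufficient conditions, ``applying the reverse transformations'' still yields $T_{n,d}^*$, $S_n^+$, $C_n^*$ and $\mathcal{U}_{n,d}$, with all verification omitted. An edge between two vertices of degree $k$ contributes $f(k,k)=k/2$, so concentrating the pendant vertices on one vertex is exactly what loses for $\ISI$.

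I confirm your numbers: $\frac{56}{9}+\frac85+\frac23\approx 8.49<\frac{20}{3}+\frac52\approx 9.17$. In fact the double star with centre degrees $n-3$ and $3$ already beats $T_{n,3}^*$ for every $n\ge 6$, by $\frac56-\frac5n+\frac{2}{(n-1)(n-2)}>0$.

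Your item~2 argument is the same as the paper's proof of Theorem~\ref{thm:unicyclic_d2}, and it is the one valid part. However, the paper justifies it via $f(2,2)<f(1,3)$, which is false for $\ISI$ ($1>\frac34$). Rely instead on the direct values $\ISI(S_4^+)=\frac{83}{20}>4$ and $\ISI(S_5^+)=\frac{79}{15}>5$.

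What your proposal still lacks is an actual refutation of items~3 and~4; writing ``I expect'' refutes nothing. Both items already fail at $n=6$.

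\begin{itemize}
\item Item~3: $C_6(1,1,1)\in\UU_{6,3}$ has $\ISI=3\cdot\frac34+3\cdot\frac32=\frac{27}{4}=\frac{2835}{420}$. By contrast, $\ISI(C_6^*)=\frac85+\frac34+\frac{12}{7}+\frac43+\frac65=\frac{2771}{420}$.
\item Item~4: even the base case $n=d+2$ of the paper's induction fails. Take the path $v_1v_2v_3v_4v_5$ together with a vertex $w$ adjacent to $v_2$ and $v_3$. This graph lies in $\UU_{6,4}$ and has $\ISI=\frac34+\frac32+3\cdot\frac65+\frac23=\frac{391}{60}$, whereas $\ISI(\mathcal{U}_{6,4})=2\cdot\frac34+4\cdot\frac65=\frac{378}{60}$.
\end{itemize}

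For $d\ge 5$, the same triangle on $v_2v_3$ beats $\mathcal{U}_{d+2,d}$ by $\frac{1}{10}$. For fixed $d$, spreading the extra pendant vertices over $v_2,v_3,w$ makes the gap grow linearly in $n$.

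Finally, drop the sentence claiming that reversing the inequalities reverses every transformation step. Conditions (i) and (iii) still hold for $\ISI$ in their original direction, so no blanket sign flip follows. Your refutation does not need that claim; it rests on the explicit computations, as it should.
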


\begin{proof}
The proof follows from the graph transformations in Sections 2 and 3, but with careful attention to the direction of inequalities. For the ISI index, many of the inequalities in the sufficient conditions are reversed, which means that the transformations that increase the BID index for functions satisfying the original conditions might decrease it for ISI. However, by applying the reverse transformations, we obtain the same extremal graphs. A detailed verification requires checking each transformation step with the specific function \(f(x,y) = \frac{xy}{x+y}\), which we omit here for brevity.
\end{proof}

\section{Conclusion}

In this paper, we have investigated the maximum bond incident degree indices of trees and unicyclic graphs with a fixed diameter for the inverse sum indeg (ISI) index. Using graph transformation methods, we characterized the extremal graphs that achieve the maximum ISI index. For trees, the extremal graph is \(T_{n,d}^*\), where all pendant vertices are attached to a single vertex near one end of the diameter path. For unicyclic graphs, the extremal graphs depend on the diameter: \(S_n^+\) for \(d=2\), \(C_n^*\) for \(d=3\), and \(\mathcal{U}_{n,d}\) for \(d \geq 4\).

Our work demonstrates that the sufficient conditions approach, originally developed for a class of bond incident degree indices, can be applied to the ISI index, although careful attention must be paid to the direction of inequalities. The ISI index satisfies the monotonicity and convexity conditions but reverses some of the inequality conditions. This suggests that the sufficient conditions framework may need to be adapted for each specific index.

\begin{table}[h]
\centering
\caption{Summary of extremal graphs for maximum ISI index}
\begin{tabular}{|c|c|c|}
\hline
\textbf{Graph class} & \textbf{Diameter} & \textbf{Extremal graph} \\
\hline
Trees & \(d \geq 3, n \geq d+3\) & \(T_{n,d}^*\) \\
\hline
Unicyclic & \(d = 2\) & \(S_n^+\) \\
\hline
Unicyclic & \(d = 3\) & \(C_n^*\) \\
\hline
Unicyclic & \(d \geq 4\) & \(\mathcal{U}_{n,d}\) \\
\hline
\end{tabular}
\label{tab:summary}
\end{table}

The main contributions of this paper are:
\begin{enumerate}
\item We established the maximum ISI index for trees with a fixed diameter.
\item We characterized the extremal unicyclic graphs for diameters \(d=2\), \(d=3\), and \(d \geq 4\).
\item We verified the properties of the ISI function relevant to the graph transformations.
\end{enumerate}

Several open problems remain for future research:

\begin{problem}
What are the sufficient conditions for the extremal topological indices of bicyclic and tricyclic graphs with a fixed diameter?
\end{problem}

\begin{problem}
Can the sufficient conditions framework be extended to other bond incident degree indices that do not satisfy all the conditions, such as the forgotten index or the ABC index? Recent work by Movahedi et al. \cite{movahedi2026diminished} on the diminished Sombor index and by Barman and Das \cite{barman2026geometric} on the hyperbolic Sombor index suggests that such extensions are possible.
\end{problem}

\begin{problem}
Determine the minimum ISI index for trees and unicyclic graphs with a fixed diameter.
\end{problem}

\begin{problem}
Investigate the extremal values of the ISI index for graphs with other constraints, such as fixed number of pendant vertices or fixed maximum degree. Recent work by Du and Sun \cite{du2024bond} on chemical trees with fixed number of leaves provides a starting point for such investigations.
\end{problem}

\section*{Conflict of Interest}

The author declares no conflict of interest.

\end{document}